\newif\ifjgcd
\newcommand{\behcet}{Beh{\c{c}}et A{\c{c}}{\i}kme{\c{s}}e}
\definecolor{darts}{HTML}{009670}
\definecolor{bricks}{HTML}{E74C3C}
\definecolor{steelblue}{HTML}{4682b4}
\definecolor{goldenrod}{HTML}{daa520}
\definecolor{fista}{HTML}{075187}
\definecolor{fistagray}{HTML}{929598}
\colorlet{steve}{blue!80!gray!80!green} 
\definecolor{beige}{RGB}{245,245,220}
\newcommand{\defeq}{\vcentcolon=}
\DeclareMathOperator*{\argmax}{\operatorname{argmax}}
\DeclareMathOperator*{\blkdiag}{\operatorname{blkdiag}}
\newcommand{\thetitlecolor}{black}
\newcommand{\themaintitle}{Default Main Title}
\newcommand{\titlecolor}[1]{\renewcommand{\thetitlecolor}{#1}}
\newcommand{\maintitle}[1]{\renewcommand{\themaintitle}{#1}}
\newcommand{\ddto}{\textsc{ddto}}
\newcommand{\vrep}{\textsc{V-rep}}
\newcommand{\hrep}{\textsc{H-rep}}
\newcommand{\cgrep}{\textsc{CG-rep}}
\newcommand{\vrepheading}{\textbf{V-{\relsize{-2}REP}}}
\newcommand{\hrepheading}{\textbf{H-{\relsize{-2}REP}}}
\newcommand{\cgrepheading}{\textbf{CG-{\relsize{-2}REP}}}
\newcommand{\tf}{t_{\mathrm{f}}}
\newcommand{\mdry}{m_{\mathrm{dry}}}
\newcommand{\mwet}{m_{\mathrm{wet}}}
\newcommand{\wx}{w^{x}}
\newcommand{\wu}{w^{u}}
\newcommand{\Wu}{\mathcal{W}_{u}}
\newcommand{\N}{\mathcal{N}}
\newcommand{\psiu}{\psi_{u}}
\newcommand{\Ac}{A_{\mathrm{c}}}
\newcommand{\Bc}{B_{\mathrm{c}}}
\newcommand{\Gc}{G_{\mathrm{c}}}
\newcommand{\gc}{d_{\mathrm{c}}}
\newcommand{\Ad}{A}
\newcommand{\Bd}{B}
\newcommand{\gd}{d}
\newcommand{\CS}{\textsc{cs}}
\newcommand{\R}{\mathbb{R}}
\newcommand{\X}{\mathcal{X}}
\newcommand{\U}{\mathcal{U}}
\newcommand{\W}{\mathcal{W}}
\newcommand{\CZ}{\mathcal{Z}}
\newcommand{\argmin}{\operatorname*{argmin}}
\newcommand{\ming}{\operatorname*{min\vphantom{g}}}
\newtcolorbox{mybox}
{
  enhanced jigsaw,
  colframe=black,
  colback=white,
  boxrule=0.75pt
}
\newtcolorbox{mybox2}
{
  enhanced jigsaw,
  colframe=black,
  colback=white,
  boxrule=0.75pt,
  hbox
}
\newtheorem{theorem}{Theorem}
\newtheorem{assumption}{Assumption}
\newtheorem*{notation}{Notation}
\newcounter{problem}
\renewcommand{\theproblem}{P\arabic{problem}}
\newcounter{problemeqsave}
\newcommand\problem@oldtheequation{}
\newcommand\problem@oldp@equation{}
\newenvironment{problem}{%
  \refstepcounter{problem}%
  \setcounter{problemeqsave}{\value{equation}}%
  \setcounter{equation}{0}%
  \let\problem@oldtheequation\theequation
  \let\problem@oldp@equation\p@equation
  \renewcommand{\theequation}{\theproblem\problem@letter}%
  \renewcommand{\p@equation}{}%
  \newcommand{\problem@letter}{%
    \ifnum\value{equation}=1\relax
    \else
      \@alph{\numexpr\value{equation}-1\relax}%
    \fi
  }%
}{%
  \setcounter{equation}{\value{problemeqsave}}%
  \let\theequation\problem@oldtheequation
  \let\p@equation\problem@oldp@equation
}
\newcommand\blankfootnote[1]{%
  \begingroup
    \renewcommand\thefootnote{}%
    \long\def\@makefntext##1{\parindent0pt\noindent ##1}%
    \insert\footins{\vskip 0.5\baselineskip}%
    \@ifpackageloaded{hyperref}{\NoHyper\footnotetext{#1}\endNoHyper}{\footnotetext{#1}}%
  \endgroup
}
    \author{Abhinav G.\ Kamath\footnote{Ph.D.\ Candidate, William E.\ Boeing Department of Aeronautics \& Astronautics, University of Washington; \hphantom{Research Intern~~~~~~~~~} Research Intern (during the development of this work), Mitsubishi Electric Research Laboratories; \texttt{agkamath@uw.edu}} }
    \author{Abhinav G.\ Kamath\footnote{Ph.D.\ Candidate, William E.\ Boeing Department of Aeronautics \& Astronautics, University of Washington; \hphantom{Research} Research Intern (during the development of this work), Mitsubishi Electric Research Laboratories; \texttt{agkamath@uw.edu}} }
\author{Abraham P.\ Vinod\footnote{Principal Research Scientist, Mitsubishi Electric Research Laboratories; \texttt{abraham.p.vinod@ieee.org}} }
\author{Purnanand Elango\footnote{Research Scientist, Mitsubishi Electric Research Laboratories; \texttt{elango@merl.com}} }
\author{Stefano Di Cairano\footnote{Distinguished Research Scientist, Mitsubishi Electric Research Laboratories; \texttt{dicairano@merl.com}} }
\author{Avishai Weiss\footnote{Senior Principal Research Scientist, Mitsubishi Electric Research Laboratories; \texttt{weiss@merl.com}}}
\affil{}
\newtcolorbox{fullbox}
{
  enhanced jigsaw,
  colframe=black,
  colback=white,
  drop shadow=black!50!white,
  boxrule=0.75pt
}
\newtcolorbox{cropbox}
{
  enhanced jigsaw,
  colframe=black,
  colback=white,
  drop shadow=black!50!white,
  boxrule=0.75pt,
  hbox
}
\title{\Large\textcolor{\thetitlecolor}{\textbf{\themaintitle}}\vspace{1em}}
\date{}
\newtheorem{remark}{Remark}
\begin{document}



\maketitle

\topskip0pt
\vspace*{\fill}
\begin{abstract}
    \noindent We present a real-time-capable set-based framework for closed-loop predictive control of autonomous systems using tools from computational geometry, dynamic programming, and convex optimization. The control architecture relies on the offline precomputation of the controllable tube, i.e, a time-indexed sequence of controllable sets\ifjgcd{}\else{, which are sets that contain all possible states that can reach a terminal set under state and control constraints}\fi{}. Sets are represented using constrained zonotopes (CZs), which are efficient encodings of convex polytopes that support fast set operations and enable tractable dynamic programming in high dimensions. Online, we obtain a globally optimal control profile\ifjgcd{}\else{ via a forward rollout, i.e.,}\fi{} by solving a series of one-step optimal control problems\ifjgcd{}\else{, each of which takes the current state to the next controllable set in the tube}\fi{}. Our key contributions are: (1) free-final-time optimality: we devise an optimal horizon computation algorithm to achieve global optimality; (2) robustness: we handle stochastic uncertainty in both the state and control, with probabilistic guarantees, by constructing bounded disturbance sets; (3) resilience: we develop (i) an optimization-free approach to computing the instantaneous reachable set, i.e., the reachable set from the current state, to enable, for example, large/maximal divert maneuvers, and (ii) an approach to achieving maximal decision-deferral, i.e., maintaining reachability/divert-feasibility to multiple targets for as long as possible.\ifjgcd{}\else{ The optimal and resilient control approaches we propose are exact (approximation-free) for optimal control problems with polytopic feasible sets, and conservative in the right direction for their robust variants.}\fi{} By means of an autonomous precision landing case study, we demonstrate globally optimal free-final-time guidance, robustness to navigation and actuation uncertainties, instantaneous divert envelope computation, and maximal decision-deferral.

\ifjgcd
    \blankfootnote{The robust control results in this manuscript will be presented at the AIAA SciTech 2026 Forum (12-16 January 2026, Orlando, FL).}
\fi
\end{abstract}
\vspace*{\fill}%
\thispagestyle{empty}

\ifjgcd
\else
    \newpage
    \begin{spacing}{1.25}
    {\footnotesize\tableofcontents}
    \end{spacing}
    \newpage
\fi

\section{Introduction}

Modern autonomous systems require advanced real-time-capable control methods. The control problem of autonomous precision landing is particularly challenging owing to payload capacity coming at a premium, estimation and control errors due to uncertainty in navigation and actuation, and the lack of safety due to incomplete or outdated information about the landing zone \citep{starek2015spacecraft,blackmore2016autonomous,carson2019splice,malyuta2021advances}. At the same time, the overarching goal of autonomous precision landing missions is to maximize the payload capacity by minimizing propellant consumption (optimal control), while remaining robust to navigation and actuation uncertainty (robust control), and maintaining safety in the presence of unforeseen hazards (resilient control). These competing requirements naturally lead to the need for a unified control architecture.

In this work, motivated by the autonomous precision landing problem, we develop a computationally tractable and real-time-capable approach to optimal, robust, and resilient predictive control—for a class of systems—within a unified control architecture. Our approach is set-theoretic: we present general results for closed-loop optimal control, for control under modeled state and control uncertainty, and for control in the presence of unmodeled uncertainty. We develop a computational framework for closed-loop control via dynamic programming over a controllable tube, i.e., a time-indexed sequence of controllable sets \citep{vinod2025set}. For computational tractability, we leverage constrained zonotopes, a representation of compact convex polytopes that admits efficient and scalable set operations \citep{scott2016constrained}.

Robust open-loop control with both state and control uncertainty can be prohibitively conservative, motivating the need for a closed-loop control framework \citep{borrelli2017predictive}. Global optimality in a closed-loop setting is naturally characterized by dynamic programming \citep{bellman1954theory}. Standard approaches to dynamic programming, however, require discretization of the state space and suffer from the curse of dimensionality, making them intractable for high-dimensional systems \citep{bertsekas2012dynamic}. Recently, it was shown that for a class of nonlinear optimal control problems, dynamic programming can be made tractable by lossless convexification, a state-augmentation, and employing controllable tubes \citep{vinod2025set}, which we adopt in this work. The tractability of dynamic programming, consequently, relies on the tractability of controllable tube generation, for which we use constrained zonotopes \citep{scott2016constrained}.

The main contributions of this paper are: (i) a general set-theoretic result for closed-loop free-final-time control, (ii) robust control accounting for both control uncertainty and time-varying state uncertainty, (iii) a general set-theoretic result for computing the reachable set from the current state (i.e., the instantaneous reachable set), and (iv) set-based maximal decision-deferral, i.e., maintaining reachability of multiple targets for as long as possible, which enables large diverts; all with computationally tractable implementations, as demonstrated by means of a comprehensive autonomous precision landing case study.

Optimal predictive control for constrained systems is typically achieved by either: (i) model predictive control (MPC) and its variants \citep{garcia1989model, raimondo2009min, mayne2011tube, malyuta2019robust}, and (ii) dynamic programming (DP) and its variants \citep{bellman1954theory, bellman1966dynamic, bertsekas2012dynamic, powell2007approximate, powell2009you}. MPC involves the generation of an open-loop control sequence over a time-horizon, only applying the first control input, and repeating the process at each step \citep{garcia1989model, borrelli2017predictive, weiss2015model, accikmecse2011robust, malyuta2019robust}. Despite providing a feedback action to close the loop, the nominal prediction across the time-horizon itself is feedback-agnostic, i.e., it is open-loop, and robust variants tend to be conservative in terms of handling disturbances \citep{borrelli2017predictive}. DP, in contrast, yields globally optimal closed-loop policies, but standard techniques rely on state-space discretization and storage of value function evaluations for each state across the entire time-horizon, the number of which grows exponentially as a function of the state dimension, thus limiting its use to small-dimensional systems \citep{bertsekas2012dynamic}. Owing to the closed-loop nature of DP, robust control is less conservative than open-loop approaches.

Resilient control is an emerging field of control \citep{rieger2009resilient, chamon2020resilient} that deals with keeping the system under consideration safe in the face of unmodeled uncertainties, or, in other words, \emph{unknown unknowns}, such as disruptions and faults. Landing guidance algorithms with built-in divert capabilities serve as a good example for this \citep{acikmese2012g, acikmese2013flight, harris2014maximum, hayner2023halo, capolupo2024descent, srinivas2024lunar, lishkova2024divert}. Reachability-steering is an approach to resilient control that has been applied in the context of precision landing \citep{tomita2024optimal,tomita2025reachability}. Another framework for open-loop resilient control is that of \emph{deferred-decision trajectory optimization} (\ddto{}) \citep{elango2022deferring, hayner2023halo, elango2025deferred}, which ensures (constrained) reachability to a collection of target terminal states for as long as possible.

Set-based methods in control \citep{blanchini2015set,borrelli2017predictive} have recently emerged as a popular approach to deterministic optimal control \citep{eren2015constrained, srinivas2024lunar} and robust optimal control \citep{lishkova2024divert, vinod2024inscribing, vinod2025projection, vinod2025set}. Specifically, \citep{eren2015constrained, srinivas2024lunar, lishkova2024divert} propose open-loop approaches that involve the construction of a controllable set for the autonomous precision landing problem, and \citep{vinod2021stochastic, gleason2021lagrangian, vinod2021abort, vinod2025projection, pycvxset, vinod2025set} propose closed-loop approaches that involve the construction of a controllable \emph{tube}, i.e., a sequence of controllable sets, applied to spacecraft rendezvous trajectory optimization \citep{vinod2021stochastic, vinod2021abort, vinod2024inscribing, pycvxset, vinod2025projection}. Most set-theoretic approaches leverage polytopes, which are often limited to low dimensions due to computational challenges \citep{blanchini2015set}. However, the introduction of \emph{constrained zonotopes} (CZs) has enabled computationally tractable set operations in high dimensions \citep{scott2016constrained}, which are largely beneficial in the field of control \citep{scott2016constrained, vinod2024inscribing, vinod2025projection, vinod2025set}.

In the context of autonomous precision landing, while earlier work dealt with deterministic optimal control \citep{acikmese2007convex, accikmecse2013lossless}, there has been recent work on stochastic guidance and feedback control for landing as well \citep{ridderhof2018uncertainty, ridderhof2019minimum, ridderhof2021minimum}. More recently, \citep{srinivas2024lunar} proposed a deterministic open-loop control framework for lunar landing with divert capabilities, which was later extended to account for navigational uncertainty in \citep{lishkova2024divert}—these papers made use of a convex-optimization-based simplicial ``facet-pushing'' algorithm to construct the open-loop controllable set, first presented in \citep{eren2015constrained}. The case study we present herein serves as a closed-loop extension to the work on autonomous precision landing with the proposed novel contributions: closed-loop free-final-time optimal control, closed-loop control with robustness to both actuation (control) uncertainties and time-varying navigation (state) uncertainties, and instantaneous-reachable-set- and maximal decision-deferral-based resilient control.

\emph{Organization of the paper}: We present optimal control and set-based preliminaries in \S\ref{sec:preliminaries}. We then describe the set-based control architecture for closed-loop free-final-time optimal control in \S\ref{sec:free-final-time}, robust control in \S\ref{sec:robust}, and resilient control in \S\ref{sec:resilient}. Finally, we demonstrate the proposed framework by means of an autonomous precision landing case study in \S\ref{sec:landing}.
\section{Preliminaries} \label{sec:preliminaries}

In this section, we present the preliminaries that will aid in the development of the proposed computational framework in the paper. In \S\ref{subsec:deterministic}, we formulate the template deterministic optimal control problem and construct a corresponding discrete-time convex optimal control problem with a polytopic feasible set. In \S\ref{subsec:polytopic_cqc}, we present a computationally tractable approach to generating a polytopic inner approximation of a second-order (quadratic) cone, intersected with a halfspace, to convert second-order cone constraint sets—which show up in several control problems, including autonomous precision landing—to polytopes. Then, in \S\ref{subsec:CZ}, we define constrained zonotopes and describe the associated set operations. The use of constrained zonotopes enables the efficient generation of controllable tubes, which we describe in \S\ref{subsec:controllable-tube}. Finally, we present the one-step optimal control problem—that is executed online as part of the forward rollout—in \S\ref{subsec:one-step}.

\subsection{The Deterministic Optimal Control Problem} \label{subsec:deterministic}

We first consider a class of free-final-time optimal control problems in continuous-time, in the Lagrange form, with a convex cost functional, a class of nonlinear dynamical systems, and convex state and control constraints, as shown in Problem \ref{prob:ct_ocp_template_nonconvex}.
\ifjgcd
\else
    \vspace{\parskip}
\fi
\begin{problem}
\begin{mybox}
\begin{center}
    \underline{\textbf{Problem \ref{prob:ct_ocp_template_nonconvex}: Template Continuous-Time Optimal Control Problem (Nonconvex)}}
\end{center}
{\small
\begin{mini!}[2]
{\tf,\,u}{\makebox[3.25cm][l]{\textit{Cost Functional}:}\quad \int_{0}^{\tf} \beta\,h(u(t))\,dt}
{\label{prob:ct_ocp_template_nonconvex}}{}
\addConstraint{\makebox[3.25cm][l]{\textit{Dynamics}:}\quad}{\dot{x}(t) = \Ac\,x(t) + \Bc\,u(t) + \Gc\,h(u(t)) + \gc,}{\quad \forall t \in [0, \tf] \label{eq:ct_dynamics_nonlinear}}
\addConstraint{\makebox[3.25cm][l]{\textit{State Constraints:}}\quad}{x(t) \in \X_{\textsc{cvx}},}{\quad \forall t \in [0, \tf]}
\addConstraint{\makebox[3.25cm][l]{\textit{Control Constraints:}}\quad}{u(t) \in \U_{\textsc{cvx}},}{\quad \forall t \in [0, \tf]}
\addConstraint{\makebox[3.25cm][l]{\textit{Initial Condition:}}\quad}{x(0) = x_{\mathrm{i}}}
\addConstraint{\makebox[3.25cm][l]{\textit{Final Condition:}}\quad}{x(\tf) \in \X_{\mathrm{f}_{\textsc{cvx}}}}
\end{mini!}
}%
\end{mybox}
\end{problem}
\ifjgcd
\else
    \vspace{\parskip}
\fi
In Problem \ref{prob:ct_ocp_template_nonconvex}, $\tf \in \R_{++}$ is the final time (a free variable), $x(t) \in \R^{n_{x}}$ is the state, $u(t) \in \R^{n_{u}}$ is the control input, $\beta \in \R_{++}$ is a constant, and $h : \R^{n_{u}} \to \R$ is a convex function; $\X_{\textsc{cvx}} \subset \R^{n_{x}}$ and $\U_{\textsc{cvx}} \subset \R^{n_{u}}$ are the convex state and control constraint sets, respectively; $\X_{\mathrm{f}_{\textsc{cvx}}}$ is the convex final state set; and $x_{\mathrm{i}} \in \R^{n_{x}}$ is the fixed initial condition. In the dynamics, $A_{c} \in \R^{n_{x} \times n_{x}}$ is the system matrix, $B_{c} \in \R^{n_{x} \times n_{u}}$ is the control input matrix, $G_{c} \in \R^{n_{x}}$ is a vector that governs the influence of the convex function, $h$, on the state evolution, and $d_{c} \in \R^{n_{x}}$ is the affine term. All the constraint sets considered in this paper are assumed to be compact. Note that Problem \ref{prob:ct_ocp_template_nonconvex} is nonconvex due to the presence of convex functions, $h$, in the equality constraint given by Equation \eqref{eq:ct_dynamics_nonlinear}.

Problem \ref{prob:ct_ocp_template_nonconvex} can be relaxed to Problem \ref{prob:ct_ocp_template_convex}, which is convex.
\ifjgcd
\else
    \vspace{\parskip}
\fi
\begin{figure}[!htb]
\begin{problem}
\begin{mybox}
\begin{center}
    \underline{\textbf{Problem \ref{prob:ct_ocp_template_convex}: Template Continuous-Time Optimal Control Problem (Convexified)}}
\end{center}
{\small
\begin{mini!}[2]
{\tf,\,u,\,\sigma}{\makebox[3.25cm][l]{\textit{Cost Functional}:}\quad J \defeq \int_{0}^{\tf} \beta\,\sigma(t)\,dt\label{eq:cost-functional}}
{\label{prob:ct_ocp_template_convex}}{}
\addConstraint{\makebox[3.25cm][l]{\textit{Dynamics}:}\quad}{\dot{x}(t) = \Ac\,x(t) + \Bc\,u(t) + \Gc\,\sigma(t) + \gc,}{\quad \forall t \in [0, \tf]}
\addConstraint{\makebox[3.25cm][l]{\textit{Convex Relaxation:}}\quad}{h(u(t)) \le \sigma(t),}{\quad \forall t \in [0, \tf] \label{eq:convex-relaxation}}
\addConstraint{\makebox[3.25cm][l]{\textit{State Constraints:}}\quad}{x(t) \in \X_{\textsc{cvx}},}{\quad \forall t \in [0, \tf]}
\addConstraint{\makebox[3.25cm][l]{\textit{Control Constraints:}}\quad}{(u(t), \sigma(t)) \in \hat{\U}_{\textsc{cvx}},}{\quad \forall t \in [0, \tf]}
\addConstraint{\makebox[3.25cm][l]{\textit{Initial Condition:}}\quad}{x(0) = x_{\mathrm{i}}}
\addConstraint{\makebox[3.25cm][l]{\textit{Final Condition:}}\quad}{x(\tf) \in \X_{\mathrm{f}_{\textsc{cvx}}}}
\end{mini!}
}%
\end{mybox}
\end{problem}
\end{figure}
\ifjgcd
\else
    \vspace{\parskip}
\fi
In Problem \ref{prob:ct_ocp_template_convex}, $\sigma(t) \in \R$, which will be treated as an auxiliary control input, is the slack variable introduced to convexify the problem, and $\hat{\U}_{\textsc{cvx}} \subset \R^{n_{u} + 1}$ is the augmented control constraint set. Note that Problem \ref{prob:ct_ocp_template_convex} bears resemblance to the problem templates considered in the vast body of literature on lossless convexification in optimal control \citep{blackmore2012lossless, malyuta2022convex, vinod2025set}, which are shown to recover globally optimal solutions to the original nonconvex problems.

\paragraph{Cost-To-Go Augmentation} Recently, in the discrete-time setting, a general version of Problem \ref{prob:ct_ocp_template_nonconvex} was shown to be losslessly convexifiable under some conditions \citep{vinod2025set}. Specifically, \citep{vinod2025set} leverages dynamic programming for the synthesis of the online controller to achieve global optimality in a closed loop using the so-called \emph{controllable tube} (see \S\ref{subsec:controllable-tube}). A key insight in \citep{vinod2025set} is that the controllable tube must explicitly account for the cost-to-go, which can be subsequently minimized in the forward rollout. Next, we discuss augmentation of the dynamical system with the cost-to-go dynamics.

From the cost functional in Equation \eqref{eq:cost-functional}, $J$, the cost-to-go at any $t \in [0, \tf]$ is given by:
\begin{align}
    c(t) \defeq \int_{t}^{\tf} \beta\,\sigma(\tau)\,d\tau \label{eq:ct_cost_to_go}
\end{align}
Note that $J = c(0)$. Taking the time-derivative of the cost-to-go yield the auxiliary dynamical system:
\begin{align}
    \dot{c}(t) &= \frac{d}{dt}\int_{t}^{\tf} \beta\,\sigma(\tau)\,d\tau = \frac{d}{dt}\int_{\tf}^{t} -\beta\,\sigma(\tau)\,d\tau = -\beta\,\sigma(t) \label{eq:ct-cost-to-go-dynamics}
\end{align}
From Equation \eqref{eq:ct_cost_to_go}, the boundary conditions for this auxiliary dynamical system are given by:
\begin{subequations}
\begin{align}
    c(0) &= \int_{0}^{\tf} \beta\,\sigma(t)\,dt = J = \text{free}\label{eq:cost_to_go_initial_condition} \\
    c(\tf) &= 0
\end{align}
\end{subequations}
Augmenting the dynamical system with the \emph{cost-to-go dynamics} (Equation \eqref{eq:ct-cost-to-go-dynamics}) and writing the cost functional in terms of the cost-to-go state yields Problem \ref{prob:ct_ocp_template_augmented}, which is equivalent to Problem \ref{prob:ct_ocp_template_convex}.
\ifjgcd
\else
    \vspace{\parskip}
\fi
\begin{problem}
\begin{mybox}
\begin{center}
    \underline{\textbf{Problem \ref{prob:ct_ocp_template_augmented}: Template Continuous-Time Optimal Control Problem (Augmented)}}
\end{center}
{\small
\begin{mini!}[2]
{\tf,\,u,\,\sigma,\,c(0)}{\makebox[3.25cm][l]{\textit{Cost Functional}:}\quad c(0)}
{\label{prob:ct_ocp_template_augmented}}{}
\addConstraint{\makebox[3.25cm][l]{\textit{Dynamics}:}\quad}{\dot{x}(t) = \Ac\,x(t) + \Bc\,u(t) + \Gc\,\sigma(t) + \gc,}{\quad \forall t \in [0, \tf]}
\addConstraint{\makebox[3.25cm][l]{\textit{Cost-to-go Dynamics}:}\quad}{\dot{c}(t) = -\beta\,\sigma(t),}{\quad \forall t \in [0, \tf]}
\addConstraint{\makebox[3.25cm][l]{\textit{State Constraints:}}\quad}{(x(t), c(t)) \in \tilde{\X}_{\textsc{cvx}},}{\quad \forall t \in [0, \tf]}
\addConstraint{\makebox[3.25cm][l]{\textit{Control Constraints:}}\quad}{(u(t), \sigma(t)) \in \tilde{\U}_{\textsc{cvx}},}{\quad \forall t \in [0, \tf]}
\addConstraint{\makebox[3.25cm][l]{\textit{Initial Condition:}}\quad}{x(0) = x_{\mathrm{i}}}
\addConstraint{\makebox[3.25cm][l]{\textit{Final Condition:}}\quad}{x(\tf) \in \X_{\mathrm{f}_{\textsc{cvx}}},\; c(\tf) = 0}
\end{mini!}
}%
\end{mybox}
\end{problem}
\ifjgcd
\else
    \vspace{\parskip}
\fi
In Problem \ref{prob:ct_ocp_template_augmented}, $\tilde{\X}_{\textsc{cvx}} \subset \R^{n_{x}+1}$ is the augmented state constraint set and $\tilde{\U}_{\textsc{cvx}} \subset \R^{n_{u}+1}$ is the augmented control constraint set, subsuming the constraint set defined by the convex relaxation in Equation \eqref{eq:convex-relaxation}.

We adopt a zero-order hold (ZOH), i.e., a piecewise-constant control parameterization, and time-discretize Problem \ref{prob:ct_ocp_template_augmented}. Assuming the true optimal control is a continuous signal, the control parameterization may lead to suboptimality, but since the dynamics themselves are exactly discretized, the continuous-time trajectory will exactly pass through the discrete temporal nodes. The state and control constraint sets are only considered at the nodes. Given (i) the convexity of the control constraint sets, and (ii) the ZOH control parameterization, the control constraints are guaranteed to be satisfied at and in between temporal nodes. For the state, however, these constraints are only imposed at (finitely many) temporal nodes and inter-sample constraint satisfaction is not guaranteed in general. The resulting discrete-time optimal control problem is shown in Problem \ref{prob:dt_ocp_template_conic}.
\ifjgcd
\else
    \vspace{\parskip}
\fi
\begin{problem}
\begin{mybox}
\begin{center}
    \underline{\textbf{Problem \ref{prob:dt_ocp_template_conic}: Template Discrete-Time Optimal Control Problem (Conic)}}
\end{center}
{\footnotesize
\begin{mini!}[2]
{\substack{N,\,c_{1}, \\u_{1},\,\ldots,\,u_{N-1}, \\ \sigma_{1},\, \ldots,\,\sigma_{N-1}}}{\makebox[3cm][l]{\textit{Cost Function}:}\quad c_{1}}
{\label{prob:dt_ocp_template_conic}}{}
\addConstraint{\makebox[3cm][l]{\textit{Dynamics}:}\quad}{(x_{k+1}, c_{k+1}) = \Ad\cdot(x_{k}, c_{k}) + \Bd\cdot(u_{k}, \sigma_{k}) + \gd,}{\quad k = 1, \ldots, N-1}
\addConstraint{\makebox[3cm][l]{\textit{State Constraints:}}\quad}{(x_{k}, c_{k}) \in \tilde{\X}_{\textsc{cvx}},}{\quad k = 1, \ldots, N}
\addConstraint{\makebox[3cm][l]{\textit{Control Constraints:}}\quad}{(u_{k}, \sigma_{k}) \in \tilde{\U}_{\textsc{cvx}},}{\quad k = 1, \ldots, N-1}
\addConstraint{\makebox[3cm][l]{\textit{Initial Condition:}}\quad}{x_{1} = x_{\mathrm{i}}}
\addConstraint{\makebox[3cm][l]{\textit{Final Condition:}}\quad}{(x_{N}, c_{N}) \in \tilde{\X}_{\mathrm{f}_{\textsc{cvx}}}}
\end{mini!}
}%
\end{mybox}
\end{problem}
\ifjgcd
\else
    \vspace{\parskip}
\fi
In Problem \ref{prob:dt_ocp_template_conic}, $N$ is the number of temporal nodes (the horizon length), $\Ad$ and $\gd$ are the discrete-time counterparts of $\blkdiag\{\Ac, 0\} \in \R^{(n_{x}+1) \times (n_{x}+1)}$ and $(\gc, 0) \in \R^{n_{x}+1}$, respectively, and $\Bd$ is the discrete-time counterpart of $\begin{bmatrix}\Bc & \Gc \\ 0_{n_{u}} & -\beta \end{bmatrix} \in \R^{(n_{x}+1) \times (n_{u}+1)}$, and $\tilde{\X}_{\mathrm{f}_{\textsc{cvx}}} \defeq \X_{f} \times \{0\}$. We assume $A$ is given by a matrix exponential and is hence always invertible. Problem \ref{prob:dt_ocp_template_conic} is conic since $\tilde{\X}_{\textsc{cvx}}$ and $\tilde{\U}_{\textsc{cvx}}$ can be arbitrary conic constraint sets.

The polytopic approximation of Problem \ref{prob:dt_ocp_template_conic} is given by Problem \ref{prob:dt_ocp_template_polytopic}.
\ifjgcd
\else
    \vspace{\parskip}
\fi
\begin{problem}
\begin{mybox}
\begin{center}
    \underline{\textbf{Problem \ref{prob:dt_ocp_template_polytopic}: Template Discrete-Time Optimal Control Problem (Polytopic)}}
\end{center}
{\footnotesize
\begin{mini!}[2]
{\substack{N,\\u_{1},\,\ldots,\,u_{N-1}, \\ \sigma_{1},\, \ldots,\,\sigma_{N-1}}}{\makebox[3cm][l]{\textit{Cost Function}:}\quad c_{1}}
{\label{prob:dt_ocp_template_polytopic}}{}
\addConstraint{\makebox[3cm][l]{\textit{Dynamics}:}\quad}{(x_{k+1}, c_{k+1}) = \Ad\cdot(x_{k}, c_{k}) + \Bd\cdot(u_{k}, \sigma_{k}) + \gd,}{\quad k = 1, \ldots, N-1 \label{eq:dt-dynamics-augmented}}
\addConstraint{\makebox[3cm][l]{\textit{State Constraints:}}\quad}{(x_{k}, c_{k}) \in \tilde{\X},}{\quad k = 1, \ldots, N}
\addConstraint{\makebox[3cm][l]{\textit{Control Constraints:}}\quad}{(u_{k}, \sigma_{k}) \in \tilde{\U},}{\quad k = 1, \ldots, N-1}
\addConstraint{\makebox[3cm][l]{\textit{Initial Condition:}}\quad}{x_{1} = x_{\mathrm{i}},\enskip\: c_{1} = \text{free}}
\addConstraint{\makebox[3cm][l]{\textit{Final Condition:}}\quad}{(x_{N}, c_{N}) \in \tilde{\X}_{\mathrm{f}}}
\end{mini!}
}%
\end{mybox}
\end{problem}
\ifjgcd
\else
    \vspace{\parskip}
\fi
In Problem \ref{prob:dt_ocp_template_polytopic}, $\tilde{\X}$, $\tilde{\U}$, and $\tilde{\X}_{\mathrm{f}}$ are the polytopic approximations of $\tilde{\X}_{\textsc{cvx}}$, $\tilde{\U}_{\textsc{cvx}}$, and $\tilde{\X}_{\mathrm{f}_{\textsc{cvx}}}$, respectively. Note that there is no approximation involved if a constraint set is polytopic to begin with.

The optimal and resilient control approaches we describe in this paper are \emph{exact} for the template polytopic optimal control problem given by Problem \ref{prob:dt_ocp_template_polytopic}. However, in the realm of optimal control, most problems are better represented by the template of Problem \ref{prob:ct_ocp_template_nonconvex}, which may involve second-order cone constraints that need to be approximated to fit the template of Problem \ref{prob:dt_ocp_template_polytopic}, as shown. In \S\ref{subsec:polytopic_cqc}, we provide a numerically tractable method to obtain ``good'' polytopic approximations to these convexified conic programs (Problem \ref{prob:dt_ocp_template_conic}) for the special case wherein the conic constraint sets are quadratic cones, which are also known as standard/unit second-order cones (SOCs), ice-cream cones, or Lorentz cones \citep{lobo1998applications}.

\subsection{Polytopic Inner Approximation of a Compact Quadratic Cone} \label{subsec:polytopic_cqc}

\vspace{\parskip}
\begin{figure}[!htb]
    \centering
    \begin{mybox2}
    \includegraphics[width=0.4\linewidth]{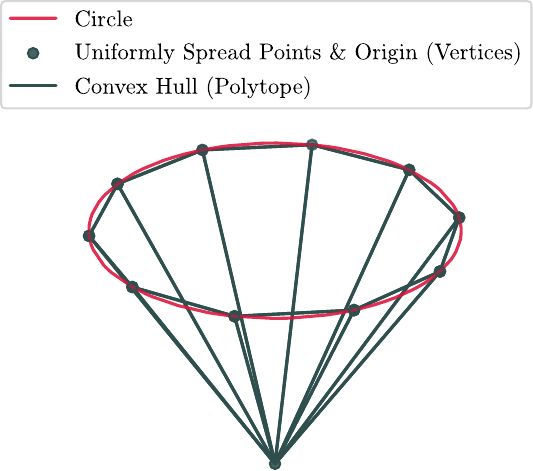}
    \end{mybox2}
    \caption{Polytopic inner approximation of a $3$-dimensional compact quadratic cone.}
    \label{fig:CQC-approximation}
\end{figure}
For controllable tube generation as described in \S\ref{subsec:controllable-tube}, the constraint sets are required to be polytopic, i.e., convex, closed, \emph{and} bounded. There exist methods in the literature to obtain polyhedral \emph{outer} approximations of an $n$-dimensional (unbounded, i.e., non-compact) quadratic cone \citep{ben2001polyhedral, vinel2014polyhedral}. However, in order to ensure that the generated controllable tube is conservative rather than infeasible, i.e., to guarantee feasibility of all points in the set, the polytopic constraint set must be an \emph{inner} approximation of the original constraint set.

Motivated by these reasons, we propose a computationally tractable approach to obtaining a polytopic \emph{inner} approximation of a closed and bounded, i.e., compact, $n$-dimensional quadratic cone, which is a special case of a quadratic cone, intersected with a halfspace.

We define a compact quadratic cone (CQC) as follows:
\begin{align}
    \mathbb{L}_{\textsc{cqc}}^{n} \defeq \left\{(z,\,t) \bigm| \norm{z}_{2} \le t,\ 0 \le t \le t_{\max} \right\}
\end{align}
where $z \in \R^{n-1}$ and $t \in \R$.

To obtain a polytopic approximation of $\mathbb{L}_{\textsc{cqc}}^{n}$, we take a slice of $\mathbb{L}_{\textsc{cqc}}^{n}$ along $t = t_{\max}$:
\begin{align}
    \mathbb{\widetilde{L}}_{\textsc{cqc}}^{n} \defeq \left\{(z,\,t) \bigm| \norm{z}_{2} \le t_{\max},\ t = t_{\max}\right\}
\end{align}
Here, $\mathcal{Z} \defeq \left\{z \bigm| \norm{z}_{2} \le t_{\max}\right\}$ defines an $(n-1)$-dimensional hyperball.

Now, we proceed to uniformly spread points on the boundary of the $(n-1)$-dimensional hyperball by (approximately) solving a difference-of-convex (DC) optimization problem via the convex-concave procedure\footnote{This is for $n \ge 3$; for $n \le 2$, the uniform spreading problem can be solved in closed-form.} \citep{pycvxset, gleason2021lagrangian, lipp2016variations}. The accuracy of the approximation can be improved by increasing the number of points. These $(n-1)$-dimensional points are then lifted to $n$ dimensions with $t = t_{\max}$.

The convex hull of the union of these points and the origin (in $n$ dimensions) is a polytope that is guaranteed to be an inner approximation of the original CQC. Figure \ref{fig:CQC-approximation} shows the polytopic approximation of a $3$-dimensional CQC with $10$ points spread on the boundary of the $2$-dimensional hyperball, i.e., a circle.

\subsection{Constrained Zonotopes} \label{subsec:CZ}

Polytopes are sets with a finite number of facets (flat sides). Convex polytopes that are closed and bounded, i.e., compact, can be represented either as the convex hull of a finite number of points/vertices (vertex representation: \vrep{}) or as the intersection of a finite number of halfspaces (halfspace representation: \hrep{}). In this work, we use the term \emph{polytopes} to refer to compact convex polytopes.

Zonotopes are sets that are given by the Minkowski sum of line segments, called generators, with unit-box-constrained weights, known as latent variables, which are typically higher-dimensional than the set itself. \emph{Constrained} zonotopes (CZs) are zonotopes with additional affine equality constraints on the weights (constrained generator representation: \cgrep{})—they are a recently introduced class of sets that can be used to describe arbitrary polytopes, with distinct computational advantages over standard representations (\vrep{}, \hrep{}) \citep{scott2016constrained}. In fact, a set is a constrained zonotope if and only if it is a polytope \citep[Theorem 1]{scott2016constrained}. While \vrep{} and \hrep{} polytopes suffer from the curse of dimensionality with exponential blow-up \citep{tiwary2008hardness}, \cgrep{} CZs do not (see Table \ref{tab:cz-v-polytope}). This key characteristic of CZs, in addition to the efficiency and accuracy of the corresponding set operations, has led to significant adoption in set-based methods, especially in the fields of estimation and control \citep{scott2016constrained, vinod2025projection}.

\begin{table}[H]
\centering
\small
\begin{tabular}{cccc}
\toprule
\textbf{Operation} & \textbf{\vrepheading{} Polytope} & \textbf{\hrepheading{} Polytope} & \textbf{\cgrepheading{} CZ} \\
\midrule
Intersection & ${\color{bricks}{\cross}}$ & ${\color{darts}{\checkmark}}$ & ${\color{darts}{\checkmark}}$ \\
\midrule
Minkowski Sum & ${\color{bricks}{\cross}}$ & ${\color{bricks}{\cross}}$ & ${\color{darts}{\checkmark}}$ \\
\midrule
Affine Map & ${\color{darts}{\checkmark}}$ & $\;{\color{bricks}{\cross}}^{\!\dagger}$ & ${\color{darts}{\checkmark}}$ \\
\bottomrule
\end{tabular}
\ifjgcd
    \vspace{2em}
\fi
\caption{Computational complexity for the required set operations in Algorithm \ref{alg:controllable_tube_recursion_deterministic}: ${\color{darts}{\checkmark}}$ indicates that the operation has polynomial complexity and ${\color{bricks}{\cross}}$ indicates that it has exponential complexity (${}^{\dagger}$exponential complexity in general, but polynomial complexity if the map is invertible).}
\label{tab:cz-v-polytope}
\end{table}
\ifjgcd
    \vspace{-1\baselineskip}
\else
    \vspace{-0.5\baselineskip}
\fi
A CZ in \cgrep{} is defined by the following set \citep{scott2016constrained, raghuraman2022set, vinod2025projection, pycvxset}:
\begin{align}
    \CZ \defeq \CZ(G, c, A, b) = \left\{x \mid \exists\,\xi,\ x = G\,\xi + c,\ \norm{\xi}_{\infty} \le 1,\ A\,\xi = b\right\} \label{eq:cz-definition}
\end{align}
where $x \in \R^{n}$, $\xi \in \R^{n_{g}}$, $G \in \R^{n \times n_{g}}$, $c \in \R^{n}$, $A \in \R^{n_{e} \times n_{g}}$, and $b \in \R^{n_{e}}$; $n$ is the dimension of the CZ, $n_{g}$ is the number of generators (columns of $G$), and $n_{e}$ is the number of equality constraints. Some useful set operations in terms of constrained zonotopes are presented in Table \ref{tab:set-operations}, where we use the following shorthand to denote a \cgrep{} CZ:
$\CZ_{i} \defeq \{G_{i},\,c_{i},\,A_{i},\,b_{i}\}$, where $i \in \mathbb{Z}_{++}$ is used to describe operations that involve more than one CZ. The subscript is dropped in the description of operations that only involve one CZ. A selector matrix is defined to be a matrix with one entry per row, which is unity.

\begin{table}[!htb]
\centering
\small
\begin{tabular}{m{3.925cm}m{5.875cm}m{6.125cm}}
\toprule
\textbf{Operation} & \textbf{Method} & \textbf{Description}\\
\midrule
\makecell[cl]{Affine Map \\ $R\,\CZ + r$} & \makecell[l]{$\left\{R\,G,\,R\,c + r,\,A,\,b\right\}$} & $R\,\CZ + r \defeq \{R\,x + r \mid x \in \CZ\}$, where $R \in \R^{n_{r} \times n}$ and $r \in \R^{n_{r}}$\\
\midrule
\makecell[cl]{Intersection \\ $\CZ_{1} \cap \CZ_{2}$} & $\!\left\{\begin{bmatrix}G_{1} & 0\end{bmatrix}\!,\,c_{1}, \begin{bmatrix}A_{1} & 0 \\ 0 & A_{2} \\ G_{1} & -G_{2}\end{bmatrix}\!, \begin{bmatrix}b_{1} \\ b_{2} \\ c_{2} - c_{1}\end{bmatrix}\right\}$ & $\CZ_{1} \cap \CZ_{2} \defeq \{x \in \CZ_{1} \mid x \in \CZ_{2}\}$ \\
\midrule
\makecell[cl]{Minkowski Sum \\ $\CZ_{1} \oplus \CZ_{2}$} & $\left\{\begin{bmatrix}G_{1} & G_{2}\end{bmatrix}\!,\,c_{1} + c_{2}, \begin{bmatrix}A_{1} & 0 \\ 0 & A_{2}\end{bmatrix}\!, \begin{bmatrix}b_{1} \\ b_{2}\end{bmatrix}\right\}$ & $\CZ_{1} \oplus \CZ_{2} \defeq \{x_{1} + x_{2} \mid x_{1} \in \CZ_{1},\, x_{2} \in \CZ_{2}\}$ \\
\midrule
\makecell[cl]{Pontryagin Difference \\ $\CZ_{1} \ominus \CZ_{2}$} & \makecell[cl]{\citep[Algorithm 3]{vinod2025projection} \\ (inner approximation)} & $\CZ_{1} \ominus \CZ_{2} \defeq \{x_{1} \mid \forall x_{2} \in \CZ_{2},\, x_{1} + x_{2} \in \CZ_{1}\}$ \\
\midrule
\makecell[cl]{Intersection with Affine Set \\ $\CZ \cap \{x \mid H\,x = h\}$} & $\left\{G,\,c, \begin{bmatrix}A \\ H\,G\end{bmatrix}\!, \begin{bmatrix}b \\ h - H\,c\end{bmatrix}\right\}$ & $x \in \R^{n}$, $H \in \R^{n_{h} \times n}$, and $h \in \R^{n_{h}}$ \\
\midrule
\makecell[cl]{Slice \\ $\CZ \cap \{x \mid E\,x = \bar{x}\}$} & $\begin{aligned}\left\{G,\,c, \begin{bmatrix}A \\ E\,G\end{bmatrix}\!, \begin{bmatrix}b \\ \bar{x} - E\,c\end{bmatrix}\right\}\end{aligned}$ & special case of intersection with affine set, where $\bar{x} \in \R^{n_{\bar{x}}}$, $n_{\bar{x}} \leq n$, and $E \in \R^{n_{\bar{x}} \times n}$ is a selector matrix \\
\midrule
\makecell[cl]{Projection \\ $E\,\CZ$} & $\left\{E\,G,\,E\,c,\,A,\,b\right\}$ & special case of affine map, where $E \in \R^{n_{\bar{x}} \times n}$ is a selector matrix, $n_{\bar{x}} \leq n$ \\
\midrule
\makecell[cl]{Distance} & $\underset{x\,\in\,\CZ}{\operatorname{minimize}}\ \norm{y - x}_{p}$ & $y \in \R^{n}$ is the point to be projected onto the set to compute the $p$-norm distance \\
\midrule
\makecell[cl]{Containment} & $\underset{x\,\in\,\CZ}{\operatorname{minimize}}\ \norm{y - x}_{p}$ & $y \in \CZ$ if the distance is zero; $y \notin \CZ$ otherwise \\
\midrule
\makecell[cl]{Emptiness} & $\underset{x\,\in\,\CZ}{\operatorname{minimize}}\ 0$ & the set is empty if the problem is infeasible; the set is nonempty otherwise \\
\midrule
\makecell[cl]{Support} & $\underset{x\,\in\,\CZ}{\operatorname{maximize}}\ \eta^{\top} x$ & $\eta \in \R^{n}$ is the direction vector along which to evaluate the support function \\
\midrule
\makecell[cl]{Extreme Point} & $\underset{x\,\in\,\CZ}{\argmax}\ \eta^{\top} x$ & point on the boundary of the set along the direction vector, $\eta$ \\
\bottomrule
\end{tabular}
\ifjgcd
    \vspace{2em}
\fi
\caption{{\ifjgcd\!\!\fi}Useful set operations in terms of constrained zonotopes, which can be found in \citep{scott2016constrained, raghuraman2022set, vinod2025projection, pycvxset}.}
\label{tab:set-operations}
\end{table}
\ifjgcd
    \vspace{-1em}
\fi
\subsection{Controllable Tube Generation (Offline)}\label{subsec:controllable-tube}

We refer to the target final set as the \emph{terminal set}. We refer to the set of all initial conditions from which a given final set is reachable, in the presence of state and control constraints, for a given trajectory time, as a \emph{controllable set}. The union of all such controllable sets over time is referred to as the \emph{controllable tube}.

We start with deterministic optimal control problems. For problems that fit the template of Problem \ref{prob:dt_ocp_template_polytopic}, the exact controllable tube can be constructed by means of a simple backward set-recursion, given by Algorithm \ref{alg:controllable_tube_recursion_deterministic}, which is executed offline. Algorithm \ref{alg:controllable_tube_recursion_deterministic} is computationally tractable even in high dimensions when the sets in question are represented as constrained zonotopes (\cgrep{}). This can be extended to handle problems with uncertainties, which will be discussed in \S\ref{sec:robust}. Note that the generated controllable (backward reachable) sets are guaranteed to be feasible with respect to Problem \ref{prob:dt_ocp_template_conic}.
\vspace{\parskip}
\begin{algorithm}[!b]
\small
\caption{Set Recursion for Controllable Tube Generation}\label{alg:controllable_tube_recursion_deterministic}
    \vspace{1em}
    \begin{flushleft}
        \textbf{Inputs:} $N$, $\Ad$, $\Bd$, $\gd$, $\tilde{\mathcal{X}}$, $\tilde{\mathcal{X}}_{\mathrm{f}}$, $\tilde{\mathcal{U}}$
    \end{flushleft}
    \begin{algorithmic}[1]
    \vspace{1em}
    \State $\CS_{N}$ $\leftarrow$ $\tilde{\mathcal{X}}_{\mathrm{f}}$\vspace{0.5em}
    \For{$k = (N-1), (N-2), \ldots, 2, 1$}\vspace{0.25em}
    \State $\CS_{k} = \tilde{\mathcal{X}} \cap \Ad^{-1} \left(\CS_{k+1} \oplus \left(-\Bd\,\tilde{\mathcal{U}} - \gd\right) \right)$ \Comment{$\oplus$: Minkowski sum}\vspace{0.5em}
    \EndFor \vspace{1em}
    \end{algorithmic}
    \begin{flushleft}
        \textbf{Return:} $\CS_{1, \ldots, N}$\vspace{1em}
    \end{flushleft}
\end{algorithm}
Here, $N$ is the number of temporal nodes (the horizon length) and $\CS_{k}$ is the $k^\text{th}$ controllable set, the union over $k = 1, \ldots, N$ of which is the controllable tube. If the cost-to-go is strictly monotonically decreasing (and since the cost-to-go is bounded by construction), this set recursion is guaranteed to terminate for a finite $N$, i.e., the set recursion will yield empty sets after $N$ iterations, in which case the first nonempty controllable set, $\CS_{1}$, corresponds to the maximum-feasible trajectory time. In practice, since $N$ is not known a priori, this backward recursion is performed starting with index $1$ until it terminates at an index, $N$, after which the indices are reversed.

\subsection{One-Step Optimal Control (Online)} \label{subsec:one-step}

As shown in \citep{vinod2025set}, closed-loop optimal control synthesis may be done via a \emph{forward rollout}, i.e., by solving a sequence of one-step optimal control problems. The forward rollout involves minimization of the current cost-to-go, and one-step set-containment constraints: the next-step controllable set for the propagated state, and the current-step control constraint set for the control input. The optimal control problem—given by Problem \ref{prob:one_step_ocp_polytopic}—itself is a linear program (LP), and can be solved using efficient LP solvers \citep{meindl2012analysis, gearhart2013comparison, huangfu2018parallelizing, applegate2025pdlp}. 
\ifjgcd
\else
    \vspace{\parskip}
\fi
\begin{problem}
\begin{mybox}
\begin{center}
    \underline{\textbf{Problem \ref{prob:one_step_ocp_polytopic}: One-Step Optimal Control Problem (Polytopic)}}
\end{center}
{\small
\begin{mini!}[2]
{u_{k}, \sigma_{k}, c_{k}}{\makebox[5.5cm][l]{\textit{Cost Function}:}\quad c_{k}}
{\label{prob:one_step_ocp_polytopic}}{}
\addConstraint{\makebox[5.5cm][l]{\textit{Dynamics}:}\quad}{(x_{k+1}, c_{k+1}) = \Ad\cdot(x_{k}, c_{k}) + \Bd\cdot(u_{k}, \sigma_{k}) + \gd}{}
\addConstraint{\makebox[5.5cm][l]{\textit{State Constraints (Controllable Set):}}\quad}{(x_{k+1}, c_{k+1}) \in \CS_{k+1} \label{eq:containment_one_step_polytopic}}{}
\addConstraint{\makebox[5.5cm][l]{\textit{Control Constraints (Polytopic):}}\quad}{(u_{k}, \sigma_{k}) \in \tilde{\U}}{}
\end{mini!}
}%
\end{mybox}
\end{problem}
\ifjgcd
\else
    \vspace{\parskip}
\fi
\vspace{\parskip}
\begin{remark}
    The control constraint set, $\tilde{\U}$, in the one-step optimal control problem, Problem \ref{prob:one_step_ocp_polytopic}, need not be the polytopic approximation. Instead, we can use the original conic control input set—since it is larger than the polytopic approximation by construction—and solve Problem \ref{prob:one_step_ocp}. This has the two-fold benefit of: (i) rendering the convex relaxation tight with respect to the original conic constraint, and (ii) producing solutions with a lower cost. Consequently, instead of solving a sequence of $1$-step linear programs, we would solve a sequence of $1$-step conic programs, for which there exist efficient solvers \citep{domahidi2013ecos, yu2022extrapolated, goulart2024clarabel, chari2025qoco}. \label{remark:conic_one_step_ocp}
\end{remark}
\ifjgcd
\else
    \vspace{\parskip}
\fi
\begin{problem}
\begin{mybox}
\begin{center}
    \underline{\textbf{Problem \ref{prob:one_step_ocp}: One-Step Optimal Control Problem}}
\end{center}
{\small
\begin{mini!}[2]
{u_{k}, \sigma_{k}, c_{k}}{\makebox[5.5cm][l]{\textit{Cost Function}:}\quad c_{k}}
{\label{prob:one_step_ocp}}{}
\addConstraint{\makebox[5.5cm][l]{\textit{Dynamics}:}\quad}{(x_{k+1}, c_{k+1}) = \Ad\cdot(x_{k}, c_{k}) + \Bd\cdot(u_{k}, \sigma_{k}) + \gd}{}
\addConstraint{\makebox[5.5cm][l]{\textit{State Constraints (Controllable Set):}}\quad}{(x_{k+1}, c_{k+1}) \in \CS_{k+1} \label{eq:containment_one_step_conic}}{}
\addConstraint{\makebox[5.5cm][l]{\textit{Control Constraints (Conic):}}\quad}{(u_{k}, \sigma_{k}) \in \tilde{\U}_{\textsc{cvx}}}{}
\end{mini!}
}%
\end{mybox}
\end{problem}
\ifjgcd
\else
    \vspace{\parskip}
\fi
\vspace{\parskip}
\begin{remark}
    The recursive feasibility of Problems \ref{prob:one_step_ocp_polytopic} and \ref{prob:one_step_ocp} is guaranteed by Equations \eqref{eq:containment_one_step_polytopic} and \eqref{eq:containment_one_step_conic}, respectively.
\end{remark}
\section{Free-Final-Time Optimal Control} \label{sec:free-final-time}

\vspace{0.5\baselineskip}
\begin{figure}[htp!]
    \centering
    \begin{mybox2}
    \includegraphics[width=0.65\linewidth]{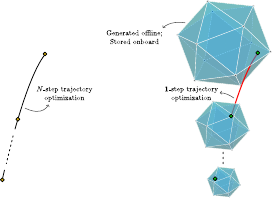}
    \end{mybox2}
    \caption{The open-loop approach (left) vs.\ the closed-loop approach using a controllable tube, i.e., a collection of controllable sets (right): the closed-loop approach (Algorithm \ref{alg:forward_rollout}) recovers a globally optimal solution to the open-loop problem (Problem \ref{prob:dt_ocp_template_polytopic}).}
    \label{fig:ol-vs-cl-fig}
\end{figure}

In this section, we present the first main result, a computationally tractable set-based approach to free-final-time optimal control. We present the computation of the optimal horizon length in \S\ref{subsec:optimal-horizon}, and the corresponding free-final-time forward rollout algorithm in \S\ref{subsec:forward-rollout}.

\subsection{Optimal Horizon Computation} \label{subsec:optimal-horizon}

With the existing controllable tube architecture, we can compute the free-final-time cost-optimal closed-loop trajectory, for a given initial condition, directly. The proposed approach requires simple and cheap set operations and the solution to small-dimensional linear programs. A notable contribution here is that this method can apply to problems that fit the template of Problem \ref{prob:dt_ocp_template_polytopic} in general, as long as the backward set-recursion terminates for a finite $N$; i.e., we make no assumptions on the relationship between the cost metric and the trajectory time. However, additional information about this relationship, if known, can be leveraged to accelerate the search. For example, if the relationship is known to be unimodal \citep{acikmese2005powered, acikmese2008enhancements}, a golden section search can be employed \citep{bertsekas1997nonlinear, kochenderfer2019algorithms}.

During online execution, the initial condition of the system may be contained in several of the controllable sets. In other words, there may exist feasible solutions to the terminal set starting from many different controllable sets—if the system can reach the terminal set in $M \le N$ steps (with $M-1$ control actions) for example, it could potentially reach the terminal set in $M+1$ steps (with $M$ control actions) and/or $M-1$ steps (with $M-2$ control actions), and so on, as well.

A natural approach then is to check for containment of the initial condition point in the controllable sets starting from the terminal set, going backward in time, and to then stop when the first containment is achieved. This would correspond to the minimum-time solution \citep{blanchini2015set}. However, rather than choosing the minimum-time controllable set (or any other time-slice based on such heuristics), we can instead choose the time-slice that corresponds to the least possible cost-to-go, which is easy to evaluate in real-time.

Now, we present a general set-theoretic result for free-final-time optimal control that we then leverage within our computational closed-loop control framework.

\vspace{0.5\baselineskip}
\begin{theorem}
     Let $x = x_{\mathrm{i}} \in \R^{n_{x}}$ be the initial condition (current state). Further, let:
     \begin{align}
        \mathcal{K} &\defeq \left\{k \in \{1, \ldots, N\} \mid x_{\mathrm{i}} \in \left\{x \mid (x,\,c) \in \CS_{k}\right\}\right\} \label{eq:containment-indices} \\
         \mathcal{C}_{k}(x_{\mathrm{i}}) &\defeq \{c \mid (x_{\mathrm{i}},\,c) \in \CS_{k}\}, \quad k \in \mathcal{K} \\
         c_{k} &\defeq \!\!\!\ming\limits_{\,c\,\in\,\mathcal{C}_{k}(x_{\mathrm{i}})} c
     \end{align}     
    where $\mathcal{K}$ is the set of indices of controllable sets $\CS_{k}$, $k = 1, \ldots, N$, that contain $x_{\mathrm{i}}$, and, for any $k \in \mathcal{K}$, $\mathcal{C}_{k}(x_{\mathrm{i}})$ is the slice of $\CS_{k}$ at $x_{\mathrm{i}}$ projected onto the cost-to-go coordinate, and $c_{k}$ is the minimum of $\mathcal{C}_{k}(x_{\mathrm{i}})$.  Assume $\mathcal{K}$ is nonempty and, for all $k \in \mathcal{K}$, $\CS_{k}$ is compact. Then the globally optimal horizon length is $N - k^{\star}$, where:
    \begin{align}
        k^{\star} = \argmin\limits_{k\,\in\,\mathcal{K}}\,c_{k}
    \end{align}
    and the corresponding globally optimal cost is:
    \begin{align}
        c^{\star} \defeq c_{k^{\star}}
    \end{align}
\end{theorem}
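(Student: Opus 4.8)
The plan is to exploit the fact that, thanks to the cost-to-go augmentation, the controllable tube $\CS_1,\ldots,\CS_N$ produced by Algorithm~\ref{alg:controllable_tube_recursion_deterministic} encodes not merely reachability of the terminal set but also the accumulated running cost; once this is made precise, the theorem reduces to minimizing finitely many real numbers over the index set $\mathcal{K}$.

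First I would establish a recursive characterization of the controllable sets. Unwinding the update $\CS_k = \tilde{\X} \cap \Ad^{-1}(\CS_{k+1} \oplus (-\Bd\,\tilde{\U} - \gd))$ with the definitions of affine preimage and Minkowski sum (and using that $\Ad$ is invertible), for $k < N$ one gets that $(x,c) \in \CS_k$ if and only if $(x,c)\in\tilde{\X}$ and there exists $(u,\sigma)\in\tilde{\U}$ with $\Ad\cdot(x,c) + \Bd\cdot(u,\sigma) + \gd \in \CS_{k+1}$. Since $\CS_N = \tilde{\X}_{\mathrm{f}}$, a downward induction on $k$ then yields: $(x,c)\in\CS_k$ if and only if there exist augmented states $(x_j,c_j)$, $j = k,\ldots,N$, and controls $(u_j,\sigma_j)$, $j = k,\ldots,N-1$, with $(x_k,c_k) = (x,c)$, obeying the augmented dynamics~\eqref{eq:dt-dynamics-augmented}, the constraints $(x_j,c_j)\in\tilde{\X}$ and $(u_j,\sigma_j)\in\tilde{\U}$, and $(x_N,c_N)\in\tilde{\X}_{\mathrm{f}}$. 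In other words, membership of $(x,c)$ in $\CS_k$ is equivalent to feasibility of the horizon-$(N-k)$ instance of Problem~\ref{prob:dt_ocp_template_polytopic} with initial augmented state $(x,c)$.

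Next I would pin down the cost-to-go coordinate. The cost-to-go row of the augmented system in~\eqref{eq:dt-dynamics-augmented} reads $c_{j+1} = c_j - \beta\,\sigma_j$ (modulo the ZOH discretization constant), and the terminal constraint forces $c_N = 0$; hence along any feasible augmented trajectory $c_k = \sum_{j=k}^{N-1}\beta\,\sigma_j$, i.e., $c_k$ is exactly the accumulated running cost incurred from node $k$ onward, not a free label. Combined with the previous step, this shows that for each $k \in \mathcal{K}$ the horizon $N-k$ is feasible from $x_{\mathrm{i}}$ and that the set of costs attainable by feasible trajectories of that horizon is exactly $\mathcal{C}_k(x_{\mathrm{i}}) = \{c \mid (x_{\mathrm{i}},c)\in\CS_k\}$, the slice of $\CS_k$ at $x_{\mathrm{i}}$ projected onto the cost-to-go axis. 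Because $\mathcal{C}_k(x_{\mathrm{i}})$ is the image of the compact nonempty set $\CS_k \cap (\{x_{\mathrm{i}}\}\times\R)$ under projection, it is itself compact and nonempty, so the minimum $c_k$ is attained and equals the optimal value of Problem~\ref{prob:dt_ocp_template_polytopic} restricted to horizon $N-k$. Conversely, for $k\notin\mathcal{K}$ the set $\CS_k$ does not contain $x_{\mathrm{i}}$ in its $x$-projection, so horizon $N-k$ is infeasible; and since Algorithm~\ref{alg:controllable_tube_recursion_deterministic} (run until termination and reindexed) produces the \emph{exact} controllable tube, no feasible trajectory from $x_{\mathrm{i}}$ has horizon outside $\{N-k \mid k\in\mathcal{K}\}$.

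Finally, since Problem~\ref{prob:dt_ocp_template_polytopic} also optimizes over the free horizon, its globally optimal value is $\min_{k\in\mathcal{K}} c_k = c_{k^\star}$, attained at $k = k^\star$, i.e., at horizon length $N - k^\star$, which gives $c^\star = c_{k^\star}$; if the $\argmin$ is not unique, any minimizer yields a globally optimal horizon. The main obstacle is the combination of the second and third steps: one must carefully chase the definitions of the Minkowski sum and the affine preimage through the backward recursion, and then invoke the structure of the augmented dynamics to certify that the cost-to-go coordinate carried by $\CS_k$ is precisely the running cost. Everything after that is a short compactness argument plus a minimization over finitely many reals; a minor bookkeeping point is reconciling the free horizon symbol of Problem~\ref{prob:dt_ocp_template_polytopic} with the fixed length $N$ of the tube, which is handled by reindexing so that the terminal set is always $\CS_N$ and the start slice is $\CS_k$ for the appropriate $k$.
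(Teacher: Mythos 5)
Your proposal is correct and follows essentially the same route as the paper: both arguments hinge on showing that, because of the cost-to-go augmentation and the terminal condition $c_{N}=0$, the slice $\mathcal{C}_{k}(x_{\mathrm{i}})$ consists exactly of the costs attainable by feasible horizon-$(N-k)$ trajectories from $x_{\mathrm{i}}$, so that $c_{k}$ equals the fixed-horizon optimal cost $J_{k}$, after which the result is a finite minimization over $k\in\mathcal{K}$. The only difference is one of detail: the paper asserts the key equivalence ``by construction'' and derives $J_{k}=c_{k}$ from the two inequalities $J_{k}\le c_{k}$ and $c_{k}\le J_{k}$, whereas you unwind Algorithm~\ref{alg:controllable_tube_recursion_deterministic} explicitly and invoke compactness for attainment of the minimum — a more self-contained rendering of the same argument.
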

\begin{proof}
    Consider any $k \in \mathcal{K}$. Let the optimal trajectory cost to go from $x_{\mathrm{i}}$ to $\X_{\mathrm{f}}$ in $N - k$ steps be $J_{k}$. By construction, $(x,\,c) \in \CS_{k}$ if and only if there exists a sequence of $N - k$ feasible one-step control actions from $x$ to $\X_{\mathrm{f}}$, with cost less than or equal to $c$, i.e.:
    \begin{align}
        J_{k} &\le c, \enskip \forall\,c \in \mathcal{C}_{k}(x_{\mathrm{i}})
        \iff J_{k} \le \!\!\!\ming\limits_{\,c\,\in\,\mathcal{C}_{k}(x_{\mathrm{i}})} c 
        \iff J_{k} \le c_{k} \label{eq:cost_le_cost2go}
    \end{align}
    Next, since the last dimension in $\CS_{k}$ is the cost-to-go state (by construction), we have that $(x_{\mathrm{i}},\,J_{k}) \in \CS_{k}$, which implies $J_{k} \in \mathcal{C}_{k}(x_{\mathrm{i}})$. Therefore, we have:
    \begin{align}
        \!\!\!\ming\limits_{\,c\,\in\,\mathcal{C}_{k}(x_{\mathrm{i}})} c &\le J_{k}
        \iff c_{k} \le J_{k} \label{eq:cost2go_le_cost}
    \end{align}
    From Equations \eqref{eq:cost_le_cost2go} and \eqref{eq:cost2go_le_cost}, we have:
    \begin{align}
        J_{k} = c_{k} \label{eq:fixed-final-time-cost}
    \end{align}
    Taking the minimum of Equation \eqref{eq:fixed-final-time-cost} over $k \in \mathcal{K}$, we get:
    \begin{align}
        \ming\limits_{k\,\in\,\mathcal{K}}\,J_{k} = \ming\limits_{k\,\in\,\mathcal{K}}\,c_{k}
    \end{align}
    which is attained at $k^{\star} = \argmin_{k\,\in\,\mathcal{K}}\,c_{k}$. Hence, the globally optimal horizon length is $N - k^{\star}$ and the corresponding globally optimal cost is $c^{\star} = c_{k^{\star}}$.
\end{proof}

\vspace{0.5\baselineskip}
\begin{remark}
    If $\CS_{k}$, $k \in \mathcal{K}$, are compact convex sets, then each $\,\mathcal{C}_{k}(x_{\mathrm{i}})$ is a closed interval in $\R$, and $\,\min_{c\,\in\,\mathcal{C}_{k}(x_{\mathrm{i}})}\,c$ is its left endpoint.
\end{remark}

To compute the minimum cost-to-go, we first slice the $(n_{x}+1)$-dimensional controllable tube along the $n_{x}$-dimensional initial condition point, project it onto the cost-to-go coordinate (to obtain an interval, i.e., a set in one dimension), and then compute the left extreme point of the remaining one-dimensional cost-to-go set (interval). This value corresponds to the minimum possible cost-to-go from that particular controllable set. Note that this set, although $1$-dimensional, is a constrained zonotope, so computing the left extreme point requires solving a linear program.

We perform this computation for each of the time-slices that contain the initial condition, and pick the time-slice with the least cost-to-go among them. Then, the forward rollout algorithm, as described in \S\ref{subsec:forward-rollout}, starting from this set, will produce the globally optimal trajectory.
\begin{algorithm}[H]
\small
\caption{Optimal Horizon Computation}\label{alg:horizon}
    \vspace{1em}
    \begin{flushleft}
        \textbf{Inputs:} (1) Initial condition: $x_{\mathrm{i}}$\\
        \hphantom{\textbf{Inputs:}}\!\; (2) Controllable tube: $\CS_{1,\ldots,N}$
    \end{flushleft}
    \begin{algorithmic}[1]
    \vspace{1em}
    \State $\mathcal{K}$ $\gets$ \texttt{containment\_check}$(x_{\mathrm{i}}, \CS_{1,\ldots,N})$\Comment{Equation \eqref{eq:containment-indices}}\vspace{0.5em}
    \For{$k \in \mathcal{K}$}\vspace{0.25em}
    \State $\mathcal{C}_{k}$ $\gets$ slice $\CS_{k}$ along $x_{\mathrm{i}}$ and project it onto the cost-to-go coordinate\vspace{0.25em}
    \State $c_{k} \gets \ming\limits_{\,c\,\in\,\mathcal{C}_{k}} c$\vspace{0.25em}
    \EndFor\vspace{0.5em}
    \State $k^{\star} \gets \argmin\limits_{k\,\in\,\mathcal{K}}\,c_{k}$ \vspace{1em}
    \end{algorithmic}
    \begin{flushleft}
        \textbf{Return:} $k^{\star}$\Comment{optimal start index}\vspace{1em}
    \end{flushleft}
\end{algorithm}
Algorithm \ref{alg:horizon} gives us the optimal starting index, $k^{\star}$, i.e., the controllable set index from which to initiate the sequence of $N-k^{\star}$ control actions, which is guaranteed to give us the globally optimal trajectory. 

\subsection{Forward Rollout} \label{subsec:forward-rollout}

The \emph{forward rollout} of one-step optimal control problems, from the current controllable set (with index $k$) to the next (with index $k + 1$), is described in Algorithm \ref{alg:forward_rollout}. Note that this is distinct from an \emph{open-loop} approach, wherein the sequence of optimal control actions is determined in one shot—unlike that, the sequence of optimal control actions is determined one step at a time here, allowing for the incorporation of feedback at every step, making it a \emph{closed-loop} approach; see Figure \ref{fig:ol-vs-cl-fig} for a depiction of the same.
\begin{algorithm}[H]
\small
\caption{Forward Rollout}\label{alg:forward_rollout}
    \vspace{1em}
    \begin{flushleft}
        \textbf{Inputs:} (1) Initial condition: $x_{\mathrm{i}}$\\
        \hphantom{\textbf{Inputs:}}\!\; (2) Controllable tube: $\CS_{1,\ldots,N}$
    \end{flushleft}
    \begin{algorithmic}[1]
    \vspace{1em}
    \State $k^{\star}$ $\gets$ \texttt{optimal\_horizon}$(x_{\mathrm{i}}, \CS_{1,\ldots,N})$\Comment{Algorithm \ref{alg:horizon}}\vspace{0.5em}
    \State $x_{k^{\star}} \gets x_{\mathrm{i}}$\vspace{0.5em}
    \For{$k = k^{\star}\!, \dots, N-1$}\vspace{0.25em}
    \State $x_{k+1}, u_{k}$ $\gets$ \texttt{one\_step\_optimal\_control}$(x_{k}, \CS_{k+1})$ \Comment{solve Problem \ref{prob:one_step_ocp}}\vspace{0.25em}
    \EndFor\vspace{1em}
    \end{algorithmic}
    \begin{flushleft}
        \textbf{Return:} $u_{k^{\star}\!,\ldots,N-1}$\Comment{globally optimal control sequence}\vspace{1em}
    \end{flushleft}
\end{algorithm}
\section{Robust Control}\label{sec:robust}

We describe the robust optimal control problem and distinguish between robust open-loop optimal control and robust closed-loop optimal control in \S\ref{subsec:robust-optimal-control}. We assume that the state measurements and the control input are subject to stochastic uncertainty, and further, that the uncertainties in the state are time-varying; we describe the modeling of these uncertainties in \S\ref{subsec:uncertainty-modeling}. The robust control approach we propose, however, requires the characterization of bounded disturbance sets, such that the closed-loop system is robust to any state and control disturbances that lie within these sets.

Rather than arbitrarily choosing bounded sets for this purpose, we are interested in the systematic construction of bounded sets that allow us to make the claim that the trajectory, even in the presence of random disturbance vectors sampled from known (possibly unbounded) probability distributions, is guaranteed to stay within the controllable tube with a user-specified probability. We describe this procedure in generality in \S\ref{subsec:stochastic}. In the case that the uncertainty is bounded to begin with, the corresponding bounded disturbance sets can be directly used within our framework.

In the presence of disturbances that are coupled in time, the construction of the bounded disturbance sets requires additional considerations. Once the disturbance sets are constructed, the state and control constraint sets, and in turn, the controllable tube, are robustified, i.e., tightened, such that they are robust to the worst-case disturbances from the bounded sets; we describe the construction of the bounded disturbance sets and the robustification of the control and state constraints in \S\ref{subsec:robust-controllable-tube}. Note that these steps are performed offline. Within the confines of this shrunken feasible space, the controller synthesized online is cost-optimal, making this approach one of \emph{robust optimal control}, in that we prioritize robustness to disturbances first, and then optimize performance among the admissible controls.

\subsection{The Robust Optimal Control Problem} \label{subsec:robust-optimal-control}

In the deterministic case, the open-loop and closed-loop optimal control problems are equivalent \citep{bertsekas2012dynamic}. In the robust case, however, there are differences—specifically, as shown in \S\ref{subsubsec:open-loop-vs-closed-loop}, the robust open-loop optimal control problem described in \S\ref{subsubsec:robust-open-loop} is more conservative than the robust closed-loop optimal control problem described in \S\ref{subsubsec:robust-closed-loop}.

\subsubsection{Robust Open-Loop Optimal Control} \label{subsubsec:robust-open-loop}

Consider the optimal control problem given by Problem \ref{prob:dt_ocp_template_polytopic}, but with the inclusion of bounded additive disturbances, $w_{k} \in \W_{k} \subset \R^{n_{x} + 1}$, where $\W_{k}$, $k = 1, \ldots, N$, are compact; see Problem \ref{prob:dt_ocp_template_polytopic_robust_open_loop}. This problem represents the robust \emph{open-loop} optimal control problem, since any control input along the future horizon only depends on the state at the beginning of the horizon.
\ifjgcd
\else
    \vspace{\parskip}
\fi
\begin{problem}
\begin{mybox}
\begin{center}
    \underline{\textbf{Problem \ref{prob:dt_ocp_template_polytopic_robust_open_loop}: Robust Open-Loop Optimal Control Problem (Polytopic)}}
\end{center}
{\footnotesize
\begin{mini!}|s|[2]
{\substack{N,\,c_{1}\\u_{1},\,\ldots,\,u_{N-1}, \\ \sigma_{1},\, \ldots,\,\sigma_{N-1}}}{\!\!\!\!\!\max_{\substack{w_{1},\,\ldots,\,w_{N}}}\quad c_{1} \label{eq:open-loop-cost-function}}
{\label{prob:dt_ocp_template_polytopic_robust_open_loop}}{}
\addConstraint{\hphantom{\!\!\!\max_{\substack{w_{1},\,\ldots,\,w_{N}}}\quad}}{(x_{k+1}, c_{k+1}) = \Ad\cdot(x_{k}, c_{k}) + \Bd\cdot(u_{k}, \sigma_{k}) + \gd + w_{k},}{\quad k = 1, \ldots, N-1 \label{eq:dt-dynamics-augmented-robust-open-loop}}
\addConstraint{\hphantom{\!\!\!\max_{\substack{w_{1},\,\ldots,\,w_{N}}}\quad}}{(x_{k}, c_{k}) \in \tilde{\X},}{\quad k = 1, \ldots, N}
\addConstraint{\hphantom{\!\!\!\max_{\substack{w_{1},\,\ldots,\,w_{N}}}\quad}}{(u_{k}, \sigma_{k}) \in \tilde{\U},}{\quad k = 1, \ldots, N-1}
\addConstraint{\hphantom{\!\!\!\max_{\substack{w_{1},\,\ldots,\,w_{N}}}\quad}}{w_{k} \in \W_{k},}{\quad k = 1, \ldots, N}
\addConstraint{\hphantom{\!\!\!\max_{\substack{w_{1},\,\ldots,\,w_{N}}}\quad}}{x_{1} = x_{\mathrm{i}}}
\addConstraint{\hphantom{\!\!\!\max_{\substack{w_{1},\,\ldots,\,w_{N}}}\quad}}{(x_{N}, c_{N}) + w_{N} \in \tilde{\X}_{\mathrm{f}}}
\end{mini!}
}%
\end{mybox}
\end{problem}
\ifjgcd
\else
    \vspace{\parskip}
\fi
In practice, the optimal control problem is first robustified (i.e., the constraints are tightened) with respect to the worst-case disturbances from the bounded sets (owing to the maximization over them), and then the entire sequence of controls is solved for at once.

\subsubsection{Robust Closed-Loop Optimal Control} \label{subsubsec:robust-closed-loop}

Consider Problem \ref{prob:dt_ocp_template_polytopic} again, with the same bounded additive disturbances, $w_{k} \in \W_{k} \subset \R^{n_{x} + 1}$, $k = 1, \ldots, N$. Now, instead of sequential maximization over disturbance and minimization over controls, we leverage the additional information provided by the value of the current state \citep{bertsekas2012dynamic}, and formulate an interleaved minimization and maximization problem; see Problem \ref{prob:dt_ocp_template_polytopic_robust_closed_loop}. This problem represents the robust \emph{closed-loop} optimal control problem, since the control input at any given point in time explicitly depends on the state at the same time.
\ifjgcd
\else
    \vspace{\parskip}
\fi
\begin{problem}
\begin{mybox}
\begin{center}
    \underline{\textbf{Problem \ref{prob:dt_ocp_template_polytopic_robust_closed_loop}: Robust Closed-Loop Optimal Control Problem (Polytopic)}}
\end{center}
{\footnotesize
\begin{mini!}|s|[2]
{\substack{\vphantom{|}N,\,c_{1}}}{\!\!\!\!\!\min_{u_{1}}\max_{w_{1}}\ldots\min_{u_{N-1}}\max_{w_{N-1}}\max_{w_{N}}\quad c_{1}}
{\label{prob:dt_ocp_template_polytopic_robust_closed_loop}}{}
\addConstraint{\hphantom{\!\!\!\min_{u_{1}}\max_{w_{1}}\ldots\min_{u_{N-1}}\max_{w_{N-1}}\max_{w_{N}}\quad}}{(x_{k+1}, c_{k+1}) = \Ad\cdot(x_{k}, c_{k}) + \Bd\cdot(u_{k}, \sigma_{k}) + \gd + w_{k},}{\quad k = 1, \ldots, N-1 \label{eq:dt-dynamics-augmented-robust-closed-loop}}
\addConstraint{\hphantom{\!\!\!\min_{u_{1}}\max_{w_{1}}\ldots\min_{u_{N-1}}\max_{w_{N-1}}\max_{w_{N}}\quad}}{(x_{k}, c_{k}) \in \tilde{\X},}{\quad k = 1, \ldots, N}
\addConstraint{\hphantom{\!\!\!\min_{u_{1}}\max_{w_{1}}\ldots\min_{u_{N-1}}\max_{w_{N-1}}\max_{w_{N}}\quad}}{(u_{k}, \sigma_{k}) \in \tilde{\U},}{\quad k = 1, \ldots, N-1}
\addConstraint{\hphantom{\!\!\!\min_{u_{1}}\max_{w_{1}}\ldots\min_{u_{N-1}}\max_{w_{N-1}}\max_{w_{N}}\quad}}{w_{k} \in \W_{k},}{\quad k = 1, \ldots, N}
\addConstraint{\hphantom{\!\!\!\min_{u_{1}}\max_{w_{1}}\ldots\min_{u_{N-1}}\max_{w_{N-1}}\max_{w_{N}}\quad}}{x_{1} = x_{\mathrm{i}}}
\addConstraint{\hphantom{\!\!\!\min_{u_{1}}\max_{w_{1}}\ldots\min_{u_{N-1}}\max_{w_{N-1}}\max_{w_{N}}\quad}}{(x_{N}, c_{N}) + w_{N} \in \tilde{\X}_{\mathrm{f}}}
\end{mini!}
}%
\end{mybox}
\end{problem}
\ifjgcd
\else
    \vspace{\parskip}
\fi
In practice, Problem \ref{prob:dt_ocp_template_polytopic_robust_closed_loop} is solved by means of dynamic programming—first, a backward recursion, which accounts for robustification, as shown in Algorithm \ref{alg:controllable_tube_recursion_robust}, and then a forward rollout, i.e., a sequence of one-step optimal control problems, akin to Problem \ref{prob:one_step_ocp}, but with the robustified constraint sets instead.

\subsubsection{Robust Closed-Loop Optimal Control vs.\ Robust Open-Loop Optimal Control} \label{subsubsec:open-loop-vs-closed-loop}

Consider the \emph{max-min inequality} \citep[Equation (5.46)]{Boyd2004}:
\begin{align}
    \mathop{\vphantom{\inf}\sup}_{z \in Z}\, \mathop{\vphantom{\sup}\inf}_{y \in Y}\, f(y, z) \le \mathop{\vphantom{\sup}\inf}_{y \in Y}\, \mathop{\vphantom{\inf}\sup}_{z \in Z}\, f(y, z) \label{eq:max-min-inequality}
\end{align}
for any $f : \R^{n} \times \R^{m} \to \R$, any $Y \subseteq \R^{n}$, and any $Z \subseteq \R^{m}$. When the maximum and the minimum exist, Equation \eqref{eq:max-min-inequality} can be written as follows:
\begin{align}
    \max_{z \in Z}\, \min_{y \in Y}\, f(y, z) \le \min_{y \in Y}\, \max_{z \in Z}\, f(y, z) \label{eq:max-min-inequality-simplified}
\end{align}
Repeated application of the max-min inequality in Equation \eqref{eq:max-min-inequality-simplified} to Problem \ref{prob:dt_ocp_template_polytopic_robust_closed_loop} (closed-loop) results in the minimization and maximization operations grouped together, as in Problem \ref{prob:dt_ocp_template_polytopic_robust_open_loop} (open-loop). Thus, we conclude that (i) the robust closed-loop optimal control problem has a lower cost than that of its open-loop counterpart, and (ii) the open-loop approach is \emph{more conservative} than the closed-loop approach.

\subsection{State and Control Uncertainty Modeling} \label{subsec:uncertainty-modeling}

Let $y \defeq (x, c) \in \R^{n_{x} + 1}$ be the augmented state and $s \defeq (u, \sigma) \in \R^{n_{u} + 1}$ be the augmented controls.

Consider the following deterministic discrete-time dynamics (Equation \eqref{eq:dt-dynamics-augmented}) in terms of $y$ and $s$:
\begin{align}
    y_{k+1} = \Ad\,y_{k} + \Bd\,s_{k} + \gd, \quad k = 1, \dots, N-1 \label{eq:deterministic_dt_dynamics}
\end{align}
We first consider the case where some (or all) of the control inputs are subject to an additive disturbance that is Gaussian and independent and identically distributed (i.i.d.):
\begin{align}
    \tilde{s}_{k} \defeq s_{k} + E_{w}^{u}\,\wu_{k}, \quad \wu_{k} \sim \N(0, \Sigma^{u}), \quad k = 1, \dots, N-1 \label{eq:uncertain_controls}
\end{align}
where $\Sigma^{u} \in \mathbb{S}^{\tilde{n}_{u}}$ is the control disturbance covariance and $\tilde{n}_{u} \le n_{u} + 1$ is the dimension of the uncertain controls. We define an \emph{embedding matrix} to be a matrix that maps/embeds the vector it left-multiplies to/in a (potentially) higher dimension—it only has one entry per column, and that entry is unity (and thus an embedding matrix is full column rank); an embedding matrix is akin to the transpose of a selector matrix. Here, $E_{w}^{u} \in \R^{(n_{u}+1) \times \tilde{n}_{u}}$ is the uncertain-control embedding matrix, i.e., it lifts $\wu_{k}$ from $\tilde{n}_{u}$ dimensions to $n_{u} + 1$ dimensions.
For $k = 1,\dots, N-1$, we have:
\begin{subequations}
\begin{align}
    y_{k+1} &= \Ad\,y_{k} + \Bd\,\tilde{s}_{k} + \gd \\
            &= \Ad\,y_{k} + \Bd\,(s_{k} + E_{w}^{u}\,\wu_{k}) + \gd \\
            &= \Ad\,y_{k} + \Bd\,s_{k} + \Bd\,E_{w}^{u}\,\wu_{k} + \gd \label{eq:control_uncertain_system}
\end{align}
\end{subequations}
Now, additionally, consider the case where some (or all) of the state measurements, $\tilde{y}_{k}, k = 1,\ldots,N-1$, are uncertain with independent additive Gaussian disturbances with time-varying covariances:
\begin{align}
    \tilde{y}_{k} &\defeq y_{k} + E_{w}^{x}\,\wx_{k}, \quad \wx_{k} \sim \N(0, \Sigma^{x}_{k}), \quad k = 1, \dots, N \label{eq:uncertain_state}
\end{align}
where $\Sigma^{x}_{k} \in \mathbb{S}^{\tilde{n}_{x}}$, $k = 1, \ldots, N$, are the (time-varying) state covariances, where $\tilde{n}_{x} \le n_{x} + 1$ is the dimension of the uncertain component of the state and $E_{w}^{x} \in \R^{(n_{x} + 1) \times \tilde{n}_{x}}$ is the uncertain-state embedding matrix. Therefore, for $k = 1, \dots, N-1$, we have:
\begin{subequations}
\begin{align}
    \tilde{y}_{k+1} &= y_{k+1} + E_{w}^{x}\,\wx_{k+1} \\
            &= \Ad\,y_{k} + \Bd\,s_{k} + \Bd\,E_{w}^{u}\,\wu_{k} + \gd + E_{w}^{x}\,\wx_{k+1} \\
            &= \Ad\left(\tilde{y}_{k} - E_{w}^{x}\,\wx_{k}\right) + \Bd\,s_{k} + \Bd\,E_{w}^{u}\,\wu_{k} + \gd + E_{w}^{x}\,\wx_{k+1} \\
            &= \Ad\,\tilde{y}_{k} + \Bd\,s_{k} + \gd + (\Bd\,E_{w}^{u}\,\wu_{k} + E_{w}^{x}\,\wx_{k+1} - \Ad\,E_{w}^{x}\,\wx_{k})
\end{align}
\end{subequations}
Finally, we have:
\begin{align}
    \tilde{y}_{k+1} = \Ad\,\tilde{y}_{k} + \Bd\,s_{k} + \gd + w_{k} \label{eq:uncertain_dt_dynamics}
\end{align}

where, for $k = 1, \ldots, N-1$, we have:
\begin{align}
    w_{k} &\hphantom{:}= \Bd\,E_{w}^{u}\,\wu_{k} + E_{w}^{x}\,\wx_{k+1} - \Ad\,E_{w}^{x}\,\wx_{k} \hphantom{:}= \begin{bmatrix}
        \Bd\,E_{w}^{u} & E_{w}^{x} & -\Ad\,E_{w}^{x}
    \end{bmatrix} \begin{bmatrix}
        \wu_{k} \\ \wx_{k+1} \\ \wx_{k}
    \end{bmatrix} \defeq M\,\hat{w}_{k}
\end{align}
where $\hat{w}_{k} \defeq (\wu_{k}, \wx_{k+1}, \wx_{k}) \in \R^{\tilde{n}_{u} +\,2\,\tilde{n}_{x}}$ is the concatenation of the disturbance vectors influencing the dynamics, and $M \defeq \begin{bmatrix}
        \Bd\,E_{w}^{u} & E_{w}^{x} & -\Ad\,E_{w}^{x}
    \end{bmatrix} \in  \R^{(n_{x} + 1) \times (\tilde{n}_{u} +\,2\,\tilde{n}_{x})}$ is a linear map. Further, note that $w_{N} = E_{w}^{x}\,w_{N}^{x}$. Equation \eqref{eq:uncertain_dt_dynamics} represents the uncertain dynamical system.

\subsection{Representing Stochastic Uncertainty as a Bounded Disturbance Set} \label{subsec:stochastic}

For Gaussian disturbances, which have unbounded distributions, ellipsoids are the best option to represent bounded disturbance sets, since they are the natural sets of confidence regions due to the shape of the Gaussian kernel. Each bounded set is the $p$-level set of a Gaussian distribution, which, in turn, guarantees that the true state at a given step, in the presence of a disturbance sampled from this Gaussian distribution, will lie within the corresponding robust controllable set with probability at least $p$ \citep{gleason2021lagrangian}.

The logical interpretation of this is as follows:
\vspace{-0.25em}
\begin{enumerate}[label=(\roman*), itemsep=-0.25em]
    \item The probability that a random disturbance vector sampled from the Gaussian distribution lies within the corresponding bounded disturbance set is $p$ (by construction of the bounded disturbance set). \label{item:logic-1}
    \item The true state at a given step is guaranteed to lie within the corresponding controllable set for any disturbance from within the bounded disturbance set (by construction/robustification of the controllable tube). \label{item:logic-2}
    \item From \ref{item:logic-1} and \ref{item:logic-2}, the probability that the true state at a given step will lie inside the corresponding controllable set for a random disturbance sampled from the Gaussian distribution is at least $p$.
\end{enumerate}

We assume the instantaneous disturbance, $w \in \R^{n}$, is Gaussian, with mean $\mu \in \R^{n}$ and covariance $\Sigma \in \mathbb{S}^{n}$, i.e., $w \sim \N(\mu, \Sigma)$.

Consider the ellipsoid with shape matrix $\Sigma$, centered around $\mu \in \R^{n}$ and parameterized by size parameter $R^{2} \in [0, \infty)$ \citep{gleason2021lagrangian}:
\begin{align}
    \mathcal{E}(R^{2}; \mu, \Sigma) \defeq \{q \in \R^{n} \mid (q - \mu)^{\top} \Sigma^{-1} (q - \mu) \le R^{2}\} \label{eq:ellipsoid}
\end{align}
Now, let $\eta \in \R^{n}$ be a standard normal random vector, i.e., $\eta \sim \N(0, I_{n})$. Then, $w$ can be written in terms of $\eta$ as follows \citep{billingsley1995}:
\begin{align}
    w &= \Sigma^{\frac{1}{2}}\,\eta + \mu \label{eq:wk-in-terms-of-eta}
\end{align}
Then,
\begin{subequations}
\begin{align}
    w \in \mathcal{E}(R^{2}; \mu, \Sigma) &\implies (w - \mu)^{\top} \Sigma^{-1} (w - \mu) \le R^{2} \label{eq:wk-ellipsoid} \\
    &\implies \left(\Sigma^{\frac{1}{2}}\,\eta + \mu - \mu\right)^{\top} \Sigma^{-1} \left(\Sigma^{\frac{1}{2}}\,\eta + \mu - \mu\right) \le R{^2} \\
    &\implies \left(\eta^{\top}\Sigma^{\frac{1}{2}}\right) \Sigma^{-\frac{1}{2}}\,\Sigma^{-\frac{1}{2}} \left(\Sigma^{\frac{1}{2}}\,\eta\right) \le R{^2} \\
    &\implies \eta^{\top}\eta \le R^{2} \\
    &\implies \eta \in \mathcal{E}(R^{2}; 0, I_{n})
\end{align}
\end{subequations}
Consequently, we have \citep[\S 3.3.2]{gleason2021lagrangian}:
\begin{subequations}
\begin{align}
    \mathbb{P}_{w}\!\left(w \in \mathcal{E}(R^{2}; \mu, \Sigma)\right) &= \mathbb{P}_{\eta}\!\left(\eta \in \mathcal{E}(R^{2}; 0, I_{n})\right) \\
    &= \mathbb{P}_{\eta}\!\left(\chi^{2}(n) \le R^{2}\right) \\
    &= F_{\chi^{2}(n)}(R^{2})
\end{align}
\end{subequations}
where, since $\eta$ is a normal random variable, $\eta^{\top}\eta = \norm{\eta}_{2}^{2}$ can be replaced with $\chi^{2}(n)$, which is a chi-squared random variable with $n$ degrees of freedom, $F_{\chi^{2}(n)}(R^{2})$ being its cumulative distribution function.

Given a user-specified probability, $p$, we set:
\begin{align}
    R^{2} = F_{\chi^{2}(n)}^{-1}(p) \label{eq:chi_inv}
\end{align}
which gives us:
\begin{align}
    F_{\chi^{2}(n)}(R^{2}) = p
\end{align}
and hence, gives us the following guarantee:
\begin{align}
    \mathbb{P}_{w}\!\left(w \in \mathcal{E}(R^{2}; \mu, \Sigma)\right) = p
\end{align}

\subsection{Robust Controllable Tube Generation} \label{subsec:robust-controllable-tube}

Robust controllable tube generation involves the contruction of bounded disturbance sets, which we present in \S\ref{subsubsec:bounded-disturbance-set-construction}, and the tightening, i.e., \emph{robustification}, of the control and state constraint sets, which we present in \S\ref{subsubsec:control-constraint-robustification} and \S\ref{subsubsec:state-constraint-robustification}, respectively. The set recursion algorithm for robust controllable tube generation is provided in \S\ref{subsubsec:state-constraint-robustification}.

\subsubsection{Bounded Disturbance Set Construction} \label{subsubsec:bounded-disturbance-set-construction}

While $\wx_{k}$ and $\wu_{k}$ in Equations \eqref{eq:uncertain_state} and \eqref{eq:uncertain_controls}, respectively, are independent across time, $\hat{w}_{k}$ is not, due to the coupling of $\wx_{k}$ between time-steps. For the purpose of bounded disturbance set construction, and, consequently, robust controllable tube generation, however, we make the assumption that $\hat{w}_{k}$ are independent, i.e., we assume that $\hat{w}_{k} \sim \N(0, \Sigma^{\hat{w}}_{k})$, where $\Sigma^{\hat{w}}_{k} \defeq \blkdiag\{\Sigma^{u}, \Sigma^{x}_{k+1}, \Sigma^{x}_{k}\} \in \mathbb{S}^{\tilde{n}_{u} +\,2\,\tilde{n}_{x}}$, $k = 1, \ldots, N-1$. Note that this assumption is conservative, since it ignores the temporal correlation in $\hat{w}_{k}$ (the temporal correlation manifests as additional affine constraints on $w_{k}$, which are ignored, leading to robustification with respect to a larger disturbance set). Further, since a Gaussian distribution is closed under linear transformations of random variables \citep[Theorem 3.3.3]{tong2012multivariate}, $w_{k}$ is also Gaussian.

The corresponding bounded disturbance sets are constructed as follows. First, we consider the following ellipsoids (see Equation \eqref{eq:ellipsoid}) for $k = 1, \ldots, N-1$:
\begin{align}
    \mathcal{E}_{k}(R_{k}^{2}; 0, \Sigma^{\hat{w}}_{k}) = \{q \in \R^{\tilde{n}_{u} +\,2\,\tilde{n}_{x}} \mid q^{\top} (\Sigma^{\hat{w}}_{k})^{-1} q \le R_{k}^{2}\}
\end{align}
and for $k = N$:
\begin{align}
    \mathcal{E}_{N}(R_{N}^{2}; 0, \Sigma^{x}_{N}) = \{q \in \R^{\tilde{n}_{x}} \mid q^{\top} (\Sigma^{x}_{N})^{-1} q \le R_{N}^{2}\}
\end{align}
Since the robust controllable tube generated using these bounded disturbance sets will be tightened with respect to the worst-case disturbances from these sets, the tube will be guaranteed to be robust with respect to the original disturbance vectors, $\hat{w}_{k} \defeq (\wu_{k}, \wx_{k+1}, \wx_{k}) \in \R^{\tilde{n}_{u} +\,2\,\tilde{n}_{x}}$, $k = 1, \ldots, N-1$, as well.

Next, to get the effective disturbance sets for $k = 1, \ldots, N-1$, each in $\R^{n_{x} + 1}$, we take linear transformations of $\mathcal{E}_{k}(R_{k}^{2}; 0, \Sigma^{\hat{w}}_{k})$ with respect to $M$, which are ellipsoids themselves \citep{kurzhanskiy2006ellipsoidal}, i.e.,
\begin{subequations}
\begin{align}
    \mathcal{W}_{k} &\defeq \{M q \mid M \in \R^{(n_{x} + 1) \times (\tilde{n}_{u} +\,2\,\tilde{n}_{x})},\ q \in \mathcal{E}_{k}(R_{k}^{2}; 0, \Sigma^{\hat{w}}_{k})\} \\
    &\phantom{:}= \{l \in \R^{n_{x} + 1} \mid l \in \mathcal{E}_{k}(R_{k}^{2}; 0, M \Sigma^{\hat{w}}_{k} M^{\top})\} \label{eq:effective-disturbance-ellipsoid}
\end{align}
\end{subequations}
For $k = N$, we have:
\begin{subequations}
\begin{align}
    \mathcal{W}_{N} &\defeq \{E_{w}^{x}\,q \mid E_{w}^{x} \in \R^{(n_{x} + 1) \times \tilde{n}_{x}},\ q \in \mathcal{E}_{N}(R_{N}^{2}; 0, \Sigma^{x}_{N})\} \\
    &\phantom{:}= \{l \in \R^{n_{x} + 1} \mid l \in \mathcal{E}_{N}(R_{N}^{2}; 0, E_{w}^{x}\,\Sigma^{x}_{N} (E_{w}^{x})^{\!\top})\} \label{eq:effective-disturbance-ellipsoid-N}
\end{align}
\end{subequations}
Further, in accordance with Equation \eqref{eq:chi_inv}, we set $R_{k}^{2} = F_{\chi^{2}(\tilde{n}_{u} +\,2\,\tilde{n}_{x})}^{-1}(p)$, $k = 1, \ldots, N-1$, and $R_{N}^{2} = F_{\chi^{2}(\tilde{n}_{x})}^{-1}(p)$, wherein, in order to guarantee that the probability of the trajectory at the final-step $N$ lying inside the terminal set $\CS_{N}$ is $\lambda$, we choose $p$ as follows \citep{gleason2021lagrangian}:
\begin{align}
    p = \lambda^{\frac{1}{N}}
\end{align}
where $\lambda \in (0, 1)$ and $N$ is the horizon length. That is to say that with this choice of $p$, the probability that the trajectory at the terminal time-step, $N$, will lie inside the terminal set, is at least $p^{N} = \lambda$.

\subsubsection{Robustification of Control Constraints} \label{subsubsec:control-constraint-robustification}

For the closed-loop system to be robust to control disturbances, we need to tighten the control constraint set such that the commanded control lying in the tightened constraint set implies that the true uncertain control lies in the original control constraint set, for all possible control disturbances within the bounded control disturbance set. Again, the tightened constraints need to be polytopic for robust controllable tube construction, but can be conic for forward rollout. Note that the robustified control constraint set needs to be considered in the forward rollout in the robust case.

Tightening of convex constraints in general is problem-dependent, since it depends on the specific structure of the constraint set under consideration; for example, a robustified halfspace constraint can be modeled as an SOC constraint \citep[\S 4.4.2]{Boyd2004}. In \S\ref{subsec:pdg_robust}, we provide one such way to achieve robustification of the convex control constraints with respect to the worst-case control disturbance for the autonomous precision landing problem.

\subsubsection{Robustification of State Constraints and Robust Set Recursion} \label{subsubsec:state-constraint-robustification}

While we explicitly robustify the control constraint set, as shown in \S\ref{subsubsec:control-constraint-robustification}, the state constraint sets are implictly robustified via the backward set-recursion for robust controllable tube generation, given by Algorithm \ref{alg:controllable_tube_recursion_robust}. Note that Algorithm \ref{alg:controllable_tube_recursion_robust} is similar to Algorithm \ref{alg:controllable_tube_recursion_deterministic}, with the key differences being (i) the use of the robustified control constraint set, $\tilde{\mathcal{U}}_{\text{robust}}$, and (ii) the Pontryagin difference step, which implicitly robustifies the state constraints with respect to the effective disturbance sets, $\mathcal{W}_{k}$, $k = 1, \ldots, N$. Note that $\mathcal{W}_{k}$ (at each time-step) accounts for the influence of both the state and control disturbances on the evolution of the state.
\begin{algorithm}[H]
\small
\caption{Set Recursion for Robust Controllable Tube Generation}\label{alg:controllable_tube_recursion_robust}
    \vspace{1em}
    \begin{flushleft}
        \textbf{Inputs:} $N$, $\Ad$, $\Bd$, $\gd$, $\tilde{\mathcal{X}}$, $\tilde{\mathcal{X}}_{\mathrm{f}}$, $\tilde{\mathcal{U}}_{\text{robust}}$, $\mathcal{W}_{[1:N]}$
    \end{flushleft}
    \begin{algorithmic}[1]
    \vspace{1em}
    \State $\CS_{N}$ $\leftarrow$ $\tilde{\mathcal{X}}_{\mathrm{f}} \ominus \mathcal{W}_{N}$ \Comment{$\ominus$: Pontryagin difference}\vspace{0.5em}
    \For{$k = (N-1), (N-2), \ldots, 2, 1$}\vspace{0.25em}
    \State $\CS_{k} = \tilde{\mathcal{X}} \cap \Ad^{-1} \left((\CS_{k+1} \ominus \mathcal{W}_{k}) \oplus \left(-\Bd\,\tilde{\mathcal{U}}_{\text{robust}} - \gd\right) \right)$ \Comment{$\oplus$: Minkowski sum}\vspace{0.25em}
    \EndFor \vspace{1em}
    \end{algorithmic}
    \begin{flushleft}
        \textbf{Return:} $\CS_{1, \ldots, N}$\vspace{1em}
    \end{flushleft}
\end{algorithm}
\section{Resilient Control} \label{sec:resilient}

When uncertainties/disturbances that affect the system can be modeled, controllers can be synthesized to ensure that the closed-loop system is robust to the worst-case disturbance. However, we would like for the system to remain safe even in the face of unmodeled uncertainties, i.e., \emph{unknown unknowns}, such as disruptions and faults. In other words, we want the system to be \emph{resilient}. This section deals with control in such situations, referred to as resilient control—it is a deterministic framework, designed to enable uncertainty-agnostic ``backup'' maneuvers to ensure safety.

Note that resilient control is distinct from robust control, in that decisions such as choosing the appropriate backup maneuver to execute need to be made in real-time to ensure safety with respect to unforeseen events, as opposed to characterizing the (known) sources of uncertainties that constantly (but mildly) affect the system, and synthesizing controllers offline.

Specifically, we describe two forms of resilient control in this section: (i) instantaneous reachability (\S\ref{subsec:instantaneous-reachability}), and (ii) maximal decision-deferral (\S\ref{subsec:decision-deferral}). In both cases, the controller synthesized online is cost-optimal, and hence, this approach is one of \emph{resilient optimal control}. A key benefit of our set-based framework is the ability to seamlessly incorporate robustness to modeled uncertainty into the resilient control framework—with resilience to unmodeled uncertainty—by considering robust controllable tubes.

\subsection{Instantaneous Reachability} \label{subsec:instantaneous-reachability}

In real-time control applications, it may be desirable to determine feasibility of a set of pre-selected target terminal points in a computationally tractable manner, i.e., to be able to quickly assess whether or not they are feasible (reachable), in the event that the nominal target becomes infeasible or unsafe due to an unforeseen event during the maneuver. Further, in certain applications, there could be a priori-determined safety maps corresponding to a set of targets, with a requirement that the trajectory of the system terminate in a safe region—in this case, any chosen target would have to be in the intersection of the set of safe regions (known a priori) and the physically-reachable envelope of the system (to be computed in real-time).

Consider a feasible state trajectory with an associated sequence of controls. A translation-invariant subspace (of the state space) is one wherein any translation of the entire state trajectory in that subspace is also a valid state trajectory for the same sequence of controls. 

States that lie in a translation-invariant subspace do not show up on the right-hand side of the dynamics. A good example of a translation-invariant subspace is the position space for systems that adhere to Newton's second law of motion. Translation-invariant coordinates are also known as cyclic coordinates \citep{landau1969mechanics,malyuta2022convex}.
\vspace*{0.5\baselineskip}
\begin{assumption} \label{assumption:partition}
    The state, $x$, is assumed to be partitioned as $x = (\hat{x},\,\hat{x}^{\complement})$, where $\hat{x}$ are cyclic coordinates corresponding to a translation-invariant subspace of interest, and $\hat{x}^{\complement}$ are the remaining coordinates.
\end{assumption}
Note that $\hat{x}^{\complement}$ may contain translation-invariant coordinates.

The instantaneous\footnote{We use the term ``instantaneous'' to indicate that the reachable set of terminal states at the horizon endpoint is computed with respect to the \emph{current state}, rather than solely with respect to the initial condition of the overall trajectory.} forward reachable set in a subspace of interest is the set of all terminal state coordinates in that subspace, reachable from the current state at time-index $k$, in $N-k$ steps.

Using the controllable tube, it is possible to obtain a closed-form expression for the instantaneous reachable set in any translation-invariant subspace of the state space.

Now, we are ready to present the main instantaneous reachability result:

\vspace{0.5\baselineskip}
\begin{notation}
    For a set $\mathcal{A} \subset \R^{n}$ and vector $a \in \R^{n}$, $\mathcal{A} \oplus a \defeq \mathcal{A} \oplus \{a\}$, and $-\mathcal{A} \defeq \{-z \mid z \in \mathcal{A}\}$.
\end{notation}

\vspace{0.5\baselineskip}
\begin{theorem}\label{theorem:instantaneous-reachability}
    Let Assumption \ref{assumption:partition} hold. Let $x_{k} = (\hat{x}_{k},\,\hat{x}^{\complement}_{k})$ be the current state at time-index $k \in \{1, \ldots, N\}$. Let $\CS_{k} \subset \R^{n_{x}+1}$ be the controllable set at time-index $k$ of the controllable tube, and let $\mathcal{P} \subset \R^{n_{x}}$ be the projection of $\CS_{k}$ onto the state coordinates, i.e.:
    \begin{align}
        \mathcal{P} \defeq \{x \mid (x,\,c) \in \CS_{k}\}
    \end{align}
    Assume $x_{k} \in \mathcal{P}$. Let the singleton $\{\hat{x}_{\mathrm{f}}\}$ be the target final set corresponding to the $\hat{x}$-subspace of $\mathcal{P}$. Consider the slice of $\mathcal{P}$ along $\hat{x}^{\complement}_{k}$, projected onto the $\hat{x}$-coordinates:
    \begin{align}
        \mathcal{S}(\hat{x}^{\complement}_{k}) \defeq \left\{s \mid (s,\,\hat{x}^{\complement}_{k}) \in \mathcal{P}\right\} \label{eq:slice-in-subspace}
    \end{align}
    Then, the instantaneous reachable set in the $\hat{x}$-subspace is:
    \begin{align}
        \mathcal{R}(\hat{x}_{k},\,\hat{x}^{\complement}_{k}) = \hat{x}_{k} \oplus (-\mathcal{S}(\hat{x}^{\complement}_{k})) \oplus \hat{x}_{\mathrm{f}} \label{eq:instantaneous-reachable-set}
    \end{align}
\end{theorem}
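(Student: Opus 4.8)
The plan is to prove the claimed set equality by translating the reachability question back to the controllable set $\CS_k$ and invoking translation invariance. First I would make precise what ``instantaneous reachability in the $\hat{x}$-subspace'' means in terms of $\CS_k$: by the construction of the controllable tube (Algorithm~\ref{alg:controllable_tube_recursion_deterministic} together with the cost-to-go augmentation of \S\ref{subsec:deterministic}), $(x,c)\in\CS_k$ exactly when there is an admissible length-$(N-k)$ control sequence driving the augmented state from $(x,c)$ into $\tilde{\X}_{\mathrm{f}}=\X_{\mathrm{f}}\times\{0\}$ while meeting all state and control constraints. Projecting out the cost-to-go coordinate, $\mathcal{P}$ is therefore exactly the set of states from which this terminal set — whose $\hat{x}$-component is the singleton $\{\hat{x}_{\mathrm{f}}\}$ — is reachable in $N-k$ steps; since the cost-to-go is bounded by construction, existence of some $c$ with $(x,c)\in\CS_k$ is equivalent to plain reachability, so nothing is lost in passing to $\mathcal{P}$. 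Reaching a different terminal $\hat{x}$-coordinate $\hat{x}_{\mathrm{t}}$ from $x_k$ then amounts to reaching, from $x_k$, the terminal set obtained by shifting $\tilde{\X}_{\mathrm{f}}$ by $v\defeq\hat{x}_{\mathrm{t}}-\hat{x}_{\mathrm{f}}$ within the cyclic block.

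The heart of the argument is that this shift commutes with the controllable-set construction. Given Assumption~\ref{assumption:partition} and the definition of a translation-invariant subspace, translating an admissible state trajectory by any $v$ in the $\hat{x}$-directions produces an admissible trajectory for the \emph{same} control sequence; moreover the running cost $\beta\sigma$ depends only on the control, so the cost-to-go trajectory and the terminal condition $c_N=0$ are unaffected, and the non-cyclic coordinates (frozen at $\hat{x}^{\complement}_k$ on the relevant slice) are untouched. Hence the $(N-k)$-step controllable set targeting the $v$-shifted terminal set equals $\CS_k$ translated by $v$ in the $\hat{x}$-block. Therefore $\hat{x}_{\mathrm{t}}$ is reachable from $x_k=(\hat{x}_k,\hat{x}^{\complement}_k)$ iff $\bigl((\hat{x}_k-v,\,\hat{x}^{\complement}_k),\,c\bigr)\in\CS_k$ for some $c$, i.e.\ iff $\hat{x}_k-v\in\mathcal{S}(\hat{x}^{\complement}_k)$, i.e.\ iff $\hat{x}_{\mathrm{t}}\in\hat{x}_k+\hat{x}_{\mathrm{f}}-\mathcal{S}(\hat{x}^{\complement}_k)$. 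The right-hand side is precisely $\hat{x}_k\oplus(-\mathcal{S}(\hat{x}^{\complement}_k))\oplus\hat{x}_{\mathrm{f}}$, which establishes \eqref{eq:instantaneous-reachable-set}; and the hypothesis $x_k\in\mathcal{P}$ guarantees this set is nonempty, since taking $v=0$ forces $\hat{x}_{\mathrm{f}}\in\mathcal{R}(\hat{x}_k,\hat{x}^{\complement}_k)$.

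The step I expect to be the main obstacle is showing rigorously that ``admissible'' really is preserved under the $\hat{x}$-translation, i.e.\ that every ingredient of $\CS_k$ — the dynamics, the state constraint set $\tilde{\X}$, the control constraint set $\tilde{\U}$, the terminal constraint $\tilde{\X}_{\mathrm{f}}$, and the cost-to-go block — is invariant under translation in the cyclic directions. Invariance of the dynamics is exactly the defining property of a translation-invariant subspace (the cyclic coordinates do not appear on the right-hand side of the dynamics); invariance of the cost-to-go block follows from the cost functional involving only $\sigma$; and invariance of the state and terminal constraints within the cyclic block is what lets the slicing at fixed $\hat{x}^{\complement}_k$ carry through — any constraint coupling that would be disturbed by the shift is, by Assumption~\ref{assumption:partition}, carried by $\hat{x}^{\complement}$, which is held fixed throughout. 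Once this invariance is in hand, the two inclusions follow from the shift-covariance of $\CS_k$ exactly as above, and the only remaining care is bookkeeping the action of the shift as the identity on the $\hat{x}^{\complement}$- and $c$-coordinates.
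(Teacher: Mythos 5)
Your proposal is correct and rests on the same key idea as the paper's proof: translation invariance of the $\hat{x}$-subspace lets you shift feasible trajectories (equivalently, the controllable set and its target) without affecting admissibility, the constraints on $\hat{x}^{\complement}$, or the cost-to-go block, and the singleton target $\{\hat{x}_{\mathrm{f}}\}$ is what makes the reachability-of-$\hat{x}_{\mathrm{t}}$ condition an ``if and only if'' rather than a one-sided inclusion. The only cosmetic difference is that you package both inclusions at once as shift-equivariance of $\CS_{k}$, whereas the paper proves the two inclusions separately by translating an individual trajectory by $\tau = \hat{x}_{k} - s$ and by $\tau = \hat{x}_{\mathrm{f}} - r$, respectively.
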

\begin{proof}
    We prove the theorem in two steps.
    
    (i) $\hat{x}_{k} \oplus (-\mathcal{S}(\hat{x}^{\complement}_{k})) \oplus \hat{x}_{\mathrm{f}} \subseteq \mathcal{R}(\hat{x}_{k},\,\hat{x}^{\complement}_{k})$
    
    Consider any $s \in \mathcal{S}(\hat{x}^{\complement}_{k})$. By the definitions of $\mathcal{P}$ and $\mathcal{S}(\hat{x}^{\complement}_{k})$, $\hat{x}_{\mathrm{f}}$ is reachable from $(s,\,\hat{x}^{\complement}_{k})$. Given the translation-invariance of the $\hat{x}$-subspace, translate the trajectory by $\tau = \hat{x}_{k} - s$. Then the translated current state is now the current state, $(s + \tau,\,\hat{x}^{\complement}_{k}) = (\hat{x}_{k},\,\hat{x}^{\complement}_{k})$, and the corresponding translated final condition in the $\hat{x}$-subspace is $\hat{x}_{\mathrm{f}} + \tau = \hat{x}_{\mathrm{f}} + \hat{x}_{k} - s$. Since $s$ is arbitrary, we have:
    \begin{align*}
        \hat{x}_{\mathrm{f}} + \hat{x}_{k} - s &\in \mathcal{R}(\hat{x}_{k},\,\hat{x}^{\complement}_{k}), \quad \forall\,s \in \mathcal{S}(\hat{x}^{\complement}_{k}) \\
        \iff \left\{\hat{x}_{\mathrm{f}} + \hat{x}_{k} - s \mid s \in \mathcal{S}(\hat{x}^{\complement}_{k}) \right\} &\subseteq \mathcal{R}(\hat{x}_{k},\,\hat{x}^{\complement}_{k}) \\
        \iff \hat{x}_{\mathrm{f}} \oplus \hat{x}_{k} \oplus (- \mathcal{S}(\hat{x}^{\complement}_{k})) &\subseteq \mathcal{R}(\hat{x}_{k},\,\hat{x}^{\complement}_{k})
    \end{align*}

    (ii) $\mathcal{R}(\hat{x}_{k},\,\hat{x}^{\complement}_{k}) \subseteq \hat{x}_{k} \oplus (-\mathcal{S}(\hat{x}^{\complement}_{k})) \oplus \hat{x}_{\mathrm{f}}$

    Consider any $r \in \mathcal{R}(\hat{x}_{k},\,\hat{x}^{\complement}_{k})$. By the definition of $\mathcal{R}(\hat{x}_{k},\,\hat{x}^{\complement}_{k})$, $r$ is reachable from $(\hat{x}_{k},\,\hat{x}^{\complement}_{k})$. Given the translation-invariance of the $\hat{x}$-subspace, translate the trajectory by $\tau = \hat{x}_{\mathrm{f}} - r$. Then the translated final condition in the subspace is now the final condition in the $\hat{x}$-subspace, $r + \tau = \hat{x}_{\mathrm{f}}$, and the corresponding translated initial condition is $(\hat{x}_{k} + \tau,\,\hat{x}^{\complement}_{k}) = (\hat{x}_{k} + \hat{x}_{\mathrm{f}} - r,\,\hat{x}^{\complement}_{k})$. By the definitions of $\mathcal{P}$ and $\mathcal{S}(\hat{x}^{\complement}_{k})$, and since the target final set in the $\hat{x}$-subspace is a singleton, we have that $\hat{x}_{\mathrm{f}}$ is reachable from $(s,\,\hat{x}^{\complement}_{k})$ if and only if $s \in \mathcal{S}(\hat{x}^{\complement}_{k})$. Now, since $r$ is arbitrary, we have:
    \begin{align*}
        \hat{x}_{k} + \hat{x}_{\mathrm{f}} - r &\in \mathcal{S}(\hat{x}^{\complement}_{k}), \quad \forall\,r \in \mathcal{R}(\hat{x}_{k},\,\hat{x}^{\complement}_{k}) \\
        \iff \left\{\hat{x}_{\mathrm{f}} + \hat{x}_{k} - r \mid r \in \mathcal{R}(\hat{x}_{k},\,\hat{x}^{\complement}_{k}) \right\} &\subseteq \mathcal{S}(\hat{x}^{\complement}_{k}) \\
        \iff \hat{x}_{\mathrm{f}} \oplus \hat{x}_{k} \oplus (- \mathcal{R}(\hat{x}_{k},\,\hat{x}^{\complement}_{k})) &\subseteq \mathcal{S}(\hat{x}^{\complement}_{k}) \\
        \iff -\mathcal{R}(\hat{x}_{k},\,\hat{x}^{\complement}_{k}) &\subseteq (-\hat{x}_{k}) \oplus \mathcal{S}(\hat{x}^{\complement}_{k}) \oplus (-\hat{x}_{\mathrm{f}}) \\
        \iff \mathcal{R}(\hat{x}_{k},\,\hat{x}^{\complement}_{k}) &\subseteq \hat{x}_{k} \oplus (-\mathcal{S}(\hat{x}^{\complement}_{k})) \oplus \hat{x}_{\mathrm{f}}
    \end{align*}
    Since both (i) and (ii) hold, $\mathcal{R}(\hat{x}_{k},\,\hat{x}^{\complement}_{k}) = \hat{x}_{k} \oplus (-\mathcal{S}(\hat{x}^{\complement}_{k})) \oplus \hat{x}_{\mathrm{f}}$.
\end{proof}

\vspace{0.5\baselineskip}
\begin{remark}
The instantaneous reachable set in the chosen translation-invariant subspace, $\mathcal{R}(\hat{x}_{k},\,\hat{x}^{\complement}_{k})$ (Equation \eqref{eq:instantaneous-reachable-set}), is nothing but a reflection of $\mathcal{S}(\hat{x}^{\complement}_{k})$ (Equation \eqref{eq:slice-in-subspace}) about the origin, translated by $(\hat{x}_{k} + \hat{x}_{\mathrm{f}})$.
\end{remark}

\vspace{0.5\baselineskip}
\begin{remark}
If the controllable set is convex and represented as a constrained zonotope, Equation \eqref{eq:instantaneous-reachable-set} can be efficiently computed in an optimization-free manner, only making use of simple set operations. This procedure is described in Algorithm \ref{alg:forward_reachable_set_cz}, and is demonstrated by means of a simple example in Figure \ref{fig:instantaneous-reach}.
\end{remark}

Once the instantaneous reachable envelope is computed, a list of pre-selected target terminal points can be assessed for feasibility by checking for containment within the envelope, i.e., if a target terminal point is contained within the envelope, it is guaranteed to be reachable; if it is not contained in the envelope, it is guaranteed to be unreachable. Also, given a pre-determined safety map corresponding to the terminal set, a target terminal point can be chosen such that it lies in the intersection of the set of safe regions and the instantaneous reachable envelope. Further, there could be situations where the goal is to move as far away from the nominal target terminal point as possible, in which case the system can be commanded to divert to an appropriately chosen point on the boundary of the envelope.

\vspace{0.5\baselineskip}
\begin{remark}
If the target final set is not a singleton and has a nonempty interior, then the corresponding $\mathcal{R}(\hat{x}_{k},\,\hat{x}^{\complement}_{k})$ is no longer the instantaneous reachable set (in the $\hat{x}$-subspace). However, it is guaranteed to contain—i.e., it is a superset/an outer-approximation of—the true instantaneous reachable set (in the $\hat{x}$-subspace). Specifically, condition (i) in the proof of Theorem \ref{theorem:instantaneous-reachability} will hold, but condition (ii) need not. However, this outer-approximation may still be useful, especially in cases where the goal is to ensure that the target final set is free of debris, for example. Further, this bears resemblance to the notion of the \emph{obstacle region} in robot motion planning \citep[\S4.3]{lavalle2006planning}.
\end{remark}

\begin{algorithm}[H]
\small
\caption{Computation of the Instantaneous Reachable Set in a Translation-Invariant Subspace}\label{alg:forward_reachable_set_cz}
    \vspace{1em}
    \begin{flushleft}
        \textbf{Inputs:} (1) Current controllable set: $\CS_{k}$ \\
        \hphantom{\textbf{Inputs:}}\!\; (2) Current states for which to compute the reachable set: $\hat{x}_{k}$ \\
        \hphantom{\textbf{Inputs:}}\!\; (3) Target final states for which to compute the reachable set: $\hat{x}_{\mathrm{f}}$ \\
        \hphantom{\textbf{Inputs:}}\!\; (4) Complement of the current initial states for which to compute the reachable set: $\hat{x}^{\complement}_{k}$
    \end{flushleft}
    \begin{algorithmic}[1]
    \vspace{1em}
    \State $\mathcal{P}$ $\gets$ project onto the state coordinates in $\CS_{k}$
    \State $\mathcal{S}$ $\leftarrow$ slice $\mathcal{P}$ along $\hat{x}_{k}^{\complement}$ and project it onto the $\hat{x}$ coordinates
    \State $\mathcal{R}$ $\leftarrow$ $\hat{x}_{k} \oplus (-\mathcal{S}) \oplus \hat{x}_{\mathrm{f}}$ \label{line:instantaneous-reachable-set} \vspace{1em}
    \end{algorithmic}
    \begin{flushleft}
        \textbf{Return:} $\mathcal{R}$\vspace{1em}
    \end{flushleft}
\end{algorithm}

\vspace{\parskip}
\begin{figure}[!htb]
    \centering
    \includegraphics[width=0.5\linewidth]{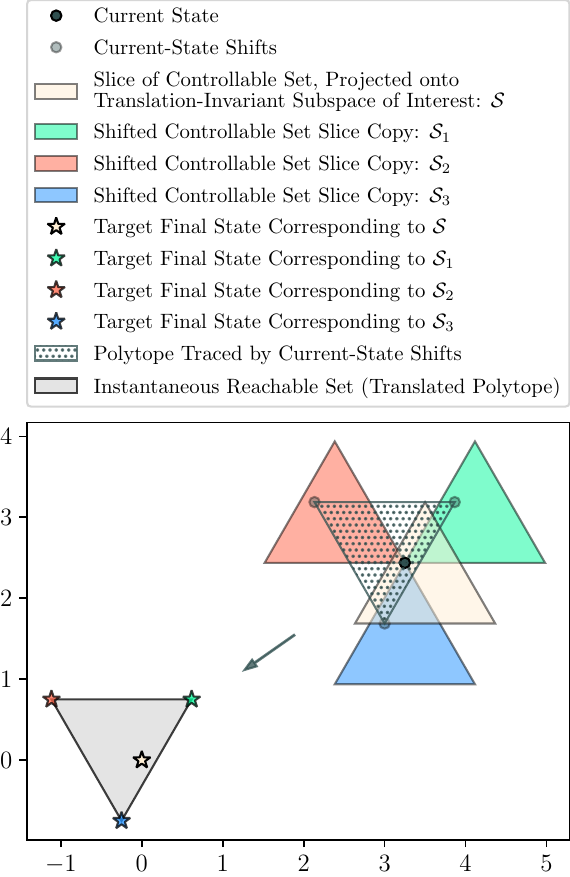}
    \caption{Computation of the instantaneous reachable set in a translation-invariant subspace of interest, given a slice of a current-state-containing controllable set projected onto that subspace.}
    \label{fig:instantaneous-reach}
\end{figure}

\subsection{Maximal Decision-Deferral} \label{subsec:decision-deferral}

In real-time trajectory generation applications, it is often the case that more information comes in (through hazard detection sensors, for instance) as the system gets closer to the nominal target. While the robust control approach we propose can account for bounded and well-characterized state and control uncertainties (see \S\ref{sec:robust}), it is not equipped to mitigate \emph{unmodeled} uncertainties/disruptions—such as a dangerous event occurring or a hazard/obstacle being detected at the last minute—that might lead to mission failure if the system were to continue proceeding towards the nominal target.

In such cases, it is desirable to have a collection of pre-selected targets as a contingency measure. While \S\ref{subsec:instantaneous-reachability} dealt with determining feasibility of these targets, in this subsection, we determine how to ensure reachability to a collection of targets for as long as possible, which in turn enables divert decision-making as late as possible.

Deferred-decision trajectory optimization (\ddto{}) was recently introduced as a deterministic open-loop framework to tackle such problems \citep{elango2022deferring, elango2025deferred}. In this work, we propose a tractable set-based, closed-loop implementation of \ddto{}, that can also seamlessly incorporate robustness to modeled uncertainties.

For this, we construct controllable tubes associated with each of the targets offline. The controllable tubes can either be deterministic, or robust to state and/or control uncertainties, as described in \S\ref{sec:robust}. In the case where the targets (final conditions) differ only in a translation-invariant subspace, it suffices to construct one controllable tube and consider translated copies of it. In the one-step OCP solved online, we require the system to stay in the \emph{intersection} of controllable sets, with a prioritized ordering of targets to determine the intersection logic.

If the chosen intersection is nonempty, the one-step OCP is solved. If the OCP returns a feasible solution, we proceed to the next intersection check. If the OCP is infeasible, we select the next best intersection or target based on the prioritized ordering, and proceed. The forward rollout algorithm with decision-deferral for the case of two targets—one nominal and one backup—is given by Algorithm \ref{alg:forward_rollout_ddto}. The nominal target is the highest-priority target.
\ifjgcd
\else
    \vspace{\parskip}
\fi
\begin{algorithm}[!htb]
\small
\caption{Forward Rollout with Decision-Deferral}\label{alg:forward_rollout_ddto}
    \vspace{1em}
    \begin{flushleft}
        \textbf{Inputs:} (1) Initial condition: $x_{\mathrm{i}}$\\[0.25em]
        \hphantom{\textbf{Inputs:}}\!\; (2) Controllable tube for the nominal target: $\CS^{\text{nominal}}_{1,\ldots,N}$\\
        \hphantom{\textbf{Inputs:}}\!\; (3) Controllable tube for the backup target: $\CS^{\text{backup}}_{1,\ldots,N}$
    \end{flushleft}
    \begin{algorithmic}[1]
    \vspace{1em}
    \State $\CS^{\text{effective}}_{[1, \ldots, N]}$ $\gets$ $\CS^{\text{nominal}}_{[1, \ldots, N]} \cap \CS^{\text{backup}}_{[1, \ldots, N]}$\vspace{0.5em} 
    \State $k^{\star}$ $\gets$ \texttt{optimal\_horizon}$(x_{\mathrm{i}}, \CS^{\text{effective}}_{[1, \ldots, N]})$\Comment{Algorithm \ref{alg:horizon}}\vspace{0.5em}
    \State $\textsc{branch}$ $\gets$ $\textsc{false}$\vspace{0.5em}
    \State $x_{k^{\star}} \gets x_{\mathrm{i}}$\vspace{0.5em}
    \For{$k = k^{\star}\!, \dots, N-1$}\vspace{0.25em}
    \If{$\textsc{branch}$ is $\textsc{false}$}\vspace{0.25em}
    \If{$\CS^{\text{effective}}_{k+1}$ is empty \textbf{OR} one-step OCP is infeasible}\vspace{0.25em}
    \State $\textsc{branch}$ $\gets$ $\textsc{true}$\vspace{0.25em}
    \State $\CS^{\text{effective}}_{[1, \ldots, N]}$ $\gets$ $\CS^{\text{nominal}}_{[1, \ldots, N]}$\vspace{0.25em}
    \State $k^{\star}$ $\gets$ \texttt{optimal\_horizon}$(x_{\mathrm{i}}, \CS^{\text{effective}}_{[1, \ldots, N]})$\Comment{Algorithm \ref{alg:horizon}}\vspace{0.25em}
    \Else\vspace{0.25em}
    \State $x_{k+1}, u_{k}$ $\gets$\vspace{0.25em} \texttt{one\_step\_optimal\_control}$(x_{k}, \CS^{\text{effective}}_{k+1})$ \Comment{solve Problem \ref{prob:one_step_ocp}}\vspace{0.25em}
    \EndIf\vspace{0.25em}
    \Else\vspace{0.25em}
    \State $x_{k+1}, u_{k}$ $\gets$\vspace{0.25em} \texttt{one\_step\_optimal\_control}$(x_{k}, \CS^{\text{effective}}_{k+1})$ \Comment{solve Problem \ref{prob:one_step_ocp}}\vspace{0.25em}
    \EndIf\vspace{0.25em}
    \EndFor\vspace{1em}
    \end{algorithmic}
    \begin{flushleft}
        \textbf{Return:} $u_{k^{\star}\!,\ldots,N-1}$\Comment{optimal control sequence}\vspace{1em}
    \end{flushleft}
\end{algorithm}
\vspace{\parskip}
In Algorithm \ref{alg:forward_rollout_ddto}, we assume that the controllable tubes associated with the two targets have the same number of controllable sets (which would be the case if the controllable tube for the backup target is a translated copy of the controllable tube for the nominal target, for instance). The containment check in the optimal horizon computation also checks for nonemptiness of the intersection.

We make use of the key insight that optimal trajectories form a tree-like structure and do not exhibit arbitrary clumping, i.e., once the trajectory \emph{trunk} (common segment) branches out, the \emph{branch} trajectories do not intersect again \citep[\S4 and Figure 3]{elango2025deferred}, and only check for emptiness until either an intersection is empty or the problem becomes infeasible—once an empty intersection or an infeasible problem is encountered, another optimal horizon computation is triggered, and the system proceeds to the nominal target (for which feasibility is guaranteed) for the remainder of the forward rollout. If the nominal target becomes unsafe while the system is in a valid intersection of controllable tubes, the system can feasibly divert to the backup target, since reachability to it was maintained by design.

Within the constraints imposed by the problem structure resulting from the chosen intersection logic, the resulting trajectory maintains reachability to the targets for as long as possible, i.e., it maintains maximal decision-deferrability, and further, since we directly leverage the optimal control approach developed in \S\ref{sec:free-final-time}, in the deterministic case, the (free-final-time) trajectory thus obtained is globally optimal with respect to the chosen cost metric.

\section{Autonomous Precision Landing} \label{sec:landing}

To demonstrate the controllable tube-based optimal, robust, and resilient control framework that we propose in the previous sections, we consider an autonomous precision landing case study. We formulate the optimal control problem in \S\ref{subsec:pdg_formulation}, describe the closed-loop simulation setup in \S\ref{subsec:sim_setup}, and demonstrate free-final-time optimal control in \S\ref{subsec:pdg_deterministic}, robust control in \S\ref{subsec:pdg_robust}, and resilient control in \S\ref{subsec:pdg_resilient}. We use \texttt{pycvxset} \citep{pycvxset} for all the set-based computations.

\subsection{Formulation} \label{subsec:pdg_formulation}

The original nonconvex continuous-time $3$-DoF minimum-fuel precision landing guidance optimal control problem \citep {acikmese2007convex} is given by Problem \ref{prob:ct_pdg_nonconvex}.
\begin{figure}[!htb]
\begin{problem}
\begin{mybox}
\begin{center}
    \underline{\textbf{Problem \ref{prob:ct_pdg_nonconvex}: Continuous-Time Precision Landing Problem (Nonconvex)}}
\end{center}
\begin{mini!}[2]
{\tf,\,T}{-m(\tf)} 
{\label{prob:ct_pdg_nonconvex}}{}
\addConstraint{\forall\,t \in [0, \tf]}\nonumber
\addConstraint{\dot{r}(t) = v(t)}
\addConstraint{\dot{v}(t) = \frac{1}{m(t)}\, T(t) -g\,e_{z}}
\addConstraint{\dot{m}(t) = -\alpha\,\norm{T(t)}_{2}}
\addConstraint{T_{\min} \le \norm{T(t)}_{2} \le T_{\max} \label{eq:thrust-bounds}}
\addConstraint{e_{z}^{\top}\,T(t) \ge \norm{T(t)}_{2}\, \cos\theta_{\max} \label{eq:thrust-pointing}}
\addConstraint{H_{\textsc{gs}}\,r(t) \le h_{\textsc{gs}}}
\addConstraint{\norm{r(t)}_{\infty} \le r_{\max} \label{eq:position-bounds}}
\addConstraint{\norm{v(t)}_{\infty} \le v_{\max} \label{eq:velocity-bounds}}
\addConstraint{\mdry \le m(t) \le \mwet}
\addConstraint{r(0) = r_{\mathrm{i}},\enskip v(0) = v_{\mathrm{i}},\enskip m(0) = \mwet}
\addConstraint{r(\tf) = r_{\mathrm{f}},\: v(\tf) = v_{\mathrm{f}},\: m(\tf) \ge \mdry}
\end{mini!}
\end{mybox}
\end{problem}
\end{figure}
In Problem \ref{prob:ct_pdg_nonconvex}, $\tf$ is the final time, $r(t) \in \R^{3}$ is the position, $v(t) \in \R^{3}$ is the velocity, $m(t) \in \R$ is the vehicle mass, $T(t) \in \R^{3}$ is the thrust vector in Cartesian coordinates, $g \in \R_{++}$ is the acceleration due to gravity, $\alpha$ is the thrust-specific fuel consumption, $T_{\min},\,T_{\max} \in \R_{++}$ are the minimum and maximum thrust magnitude bounds, respectively, $e_{z}^{\top} \defeq (0, 0, 1)$, $\theta_{\max}$ is the maximum tilt angle from the vertical, $H_{\textsc{gs}} \in \R^{n_{h} \times 3}$ and $h_{\textsc{gs}} \in \R^{n_{h}}$ are the parameters defining the glideslope constraint (as the intersection of $n_{h}$ halfspaces; see \citep[Equations S10 and S11]{malyuta2022convex}), $r_{\max},\,v_{\max} \in \R_{++}$ are the component-wise bounds on position and velocity, respectively, $\mwet,\,\mdry \in \R_{++}$ are the wet-mass and dry-mass of the vehicle, respectively, $r_{\mathrm{i}},\,r_{\mathrm{f}} \in \R^{3}$ are the initial and final conditions for the position, respectively, and $v_{\mathrm{i}},\,v_{\mathrm{f}} \in \R^{3}$ are the initial and final conditions for the velocity, respectively. Since this is a three-degree-of-freedom formulation, all quantities are defined relative to a reference frame fixed to the celestial body under consideration.

The autonomous precision landing problem has been extensively studied, and it has been shown that the nonconvex thrust lower bound constraint in Equation \eqref{eq:thrust-bounds} can be losslessly convexified by means of a convex relaxation, such that the solution to the relaxed problem is a globally optimal solution to the original problem \citep{acikmese2007convex, accikmecse2013lossless, malyuta2022convex}. The thrust pointing constraint given by Equation \eqref{eq:thrust-pointing} would be nonconvex for $\theta_{\max} > 90^{\circ}$, but we only require $\theta_{\max} \le 90^{\circ}$. Note that the state constraints in Equations \eqref{eq:position-bounds} and \eqref{eq:velocity-bounds} are only included to ensure that the state is bounded (as required for controllable tube generation). Typically, $r_{\max}$ and $v_{\max}$ are set to large values, however, and are not expected to be active.

Problem \ref{prob:ct_pdg_nonconvex} can be equivalently written in terms of the mass-normalized thrust by means of a log-mass transformation \citep{acikmese2007convex}, with $u(t) \defeq \frac{1}{m(t)} T(t)$ and $z(t) \defeq \ln m(t)$. Consequently, the control magnitude bounds become:
\begin{align}
    T_{\min}\,e^{-z(t)} \le \norm{u(t)}_{2} \le T_{\max}\,e^{-z(t)} \label{eq:control-bounds-log-mass}
\end{align}
and the log-mass dynamics are:
\begin{align}
    \dot{z}(t) = -\alpha\,\norm{u(t)}_{2} \label{eq:log-mass-dynamics}
\end{align}
both of which are nonconvex. First, we replace the time-varying bounds in Equation \eqref{eq:control-bounds-log-mass} with the following time-invariant bounds, such that the resulting constraint is a conservative approximation of the original constraint:
\begin{align}
    \frac{T_{\min}}{\mdry} \le \norm{u(t)}_{2} \le \frac{T_{\max}}{\mwet} \label{eq:time-invariant-control-bounds}
\end{align}
Note that the lower bound constraint in Equation \eqref{eq:time-invariant-control-bounds} is still nonconvex. For this, we can employ one of the following two remedies: 
\vspace{-0.25em}
\begin{enumerate}[label=(\roman*), itemsep=-0.25em]
    \item The standard convex relaxation to convexify the nonconvex lower bound \citep{acikmese2007convex, accikmecse2013lossless, malyuta2022convex}, i.e.,
    \begin{align}
        \nonumber \\[-1em]
        \sigma(t) \ge \frac{T_{\min}}{\mdry} \label{eq:lcvx-lower-bound-constraint}
        \\[-1em] \nonumber
    \end{align}
    Note that this does not exactly fit the template of Problem \ref{prob:ct_ocp_template_nonconvex}, since this serves as a convex relaxation to the original nonconvex constraint. However, when implemented, we observe that the relaxation is tight in practice, i.e., lossless convexification holds \citep{acikmese2007convex, accikmecse2013lossless, malyuta2022convex}. Although we consider a polytopic \emph{inner} approximation to the conic frustum defined by the control magnitude bounds, satisfaction of the polytopic constraint does not necessarily guarantee satisfaction of the original constraint—the lower bound constraint can be violated, since the polytopic approximation we consider is of a convex relaxation to a nonconvex constraint set. This has the adverse consequence of expanding the feasible control space—beyond that of the original control constraint set—in some regions. That said, the degree to which this constraint is violated can be arbitrarily controlled by choosing the number of points to be spread on the unit sphere. Further, this issue can be eliminated entirely by ensuring that the projection of the polytopic approximation onto the non-lifted space is an inner approximation to the original nonconvex set. \label{item:lcvx-case}
    \item Replacing the lower bound constraint with the following conservative convex constraint:
    \begin{align}
        \nonumber \\[-1em]
        e_{z}^{\top}u(t) \ge \frac{T_{\min}}{\mdry} \label{eq:conservative-lower-bound-constraint}
        \\[-1em] \nonumber
    \end{align}
    With the conservative constraint given by Equation \eqref{eq:conservative-lower-bound-constraint}, the control magnitude, and in turn, the slack variable, are implicitly lower-bounded by $\frac{T_{\min}}{\mdry}$. With Equation \eqref{eq:conservative-lower-bound-constraint}, the problem fits the template of Problem \ref{prob:ct_ocp_template_nonconvex}, in which case the only nonconvexity stems from the log-mass dynamics (Equation \eqref{eq:log-mass-dynamics}). For the discretized version of this template, there exists a blanket lossless convexification guarantee \citep{vinod2025set}. We consider the conservative convex constraint given by Equation \eqref{eq:conservative-lower-bound-constraint} in the remainder of this paper. \label{item:conservative-case}
\end{enumerate}

Further, we replace $\norm{u}_{2}$ by $\sigma$ in the corresponding pointing constraint, $e_{z}^{\top} u(t) \ge \norm{u(t)}_{2}\, \cos\theta_{\max}$ (even though it is convex), hence converting the second-order cone into an equivalent halfspace, which mitigates the need for an additional polytopic approximation.

The (Mayer) cost function, after the log-mass transformation, can be given by:
\begin{align}
    J &\defeq -z(\tf)
\end{align}
which can be equivalently expressed as a running cost:
\begin{align}
    J &= -\int_{0}^{\tf} \dot{z}(\tau)\,d\tau = -\int_{0}^{\tf} -\alpha\,\sigma(\tau)\,d\tau = \int_{0}^{\tf}\alpha\,\sigma(\tau)\,d\tau
\end{align}
Now, the cost-to-go can be defined as follows:
\begin{align}
    c(t) \defeq \int_{t}^{\tf} \alpha\,\sigma(\tau)\,d\tau \label{eq:ct_cost_to_go_pdg}
\end{align}
Taking the time-derivative of Equation \eqref{eq:ct_cost_to_go_pdg}, we get:
\begin{align}
    \dot{c}(t) &= \frac{d}{dt}\int_{t}^{\tf} \alpha\,\sigma(\tau)\,d\tau = \frac{d}{dt}\int_{\tf}^{t} -\alpha\,\sigma(\tau)\,d\tau = -\alpha\,\sigma(t) \label{eq:ct_cost_to_go_derivative_pdg}
\end{align}
From Equation \eqref{eq:ct_cost_to_go_pdg}, we can derive the boundary conditions for this auxiliary dynamical system:
\begin{subequations}
\begin{align}
    c(0) &= \int_{0}^{\tf} \alpha\,\sigma(\tau)\,d\tau \label{eq:cost_to_go_initial_condition_pdg} \\
    c(\tf) &= 0 \label{eq:cost_to_go_final_condition_pdg}
\end{align}
\end{subequations}
Now, consider the following:
\begin{subequations}
\begin{align}
    z(\tf) &= z(0) - \int_{0}^{\tf} \alpha\,\sigma(\tau)\,d\tau \\
    \implies \int_{0}^{t} \alpha\,\sigma(\tau)\,d\tau &= z(0) - z(\tf) \label{eq:cost_to_go_initial_condition_integral}
\end{align}
\end{subequations}
Substituting Equation \eqref{eq:cost_to_go_initial_condition_integral} in Equation \eqref{eq:cost_to_go_initial_condition}, we get:
\begin{align}
    c(0) &= z(0) - z(\tf) = \ln\mwet - z(\tf) \label{eq:cost_to_go_initial_condition_evaluated_pdg}
\end{align}
Collecting Equations \eqref{eq:ct_cost_to_go_derivative_pdg}, \eqref{eq:cost_to_go_final_condition_pdg}, and \eqref{eq:cost_to_go_initial_condition_evaluated_pdg}, we have the following auxiliary dynamical system:
\begin{subequations}
\begin{align}
    \dot{c}(t) &= -\alpha\,\sigma(t) \\
    c(0) &= \ln\mwet - z(\tf) \\
    c(\tf) &= 0
\end{align} \label{eq:cost_to_go_dynamics_ct}
\end{subequations}
with the bounds on the cost-to-go state being:
\begin{align}
    0 \le c(t) \le \ln\mwet - \ln\mdry = \ln\frac{\mwet}{\mdry} \label{eq:cost-to-go-bounds}
\end{align}
Although the right-hand-side (RHS) of Equation \eqref{eq:cost_to_go_dynamics_ct} is identical to the RHS of the log-mass dynamics given by Equation \eqref{eq:log-mass-dynamics}, Equations \eqref{eq:cost_to_go_dynamics_ct} represent a different system due to the different boundary conditions. In this work, we employ the standard procedure of augmenting the physical system with the auxiliary cost-to-go system, making the resulting system $8$-dimensional, which, in turn, leads to $8$-dimensional polytopes (CZs) in the controllable tube.

\vspace{\baselineskip}
\begin{remark}
The similarity between the log-mass dynamics (Equation \eqref{eq:log-mass-dynamics}) and the cost-to-go dynamics (Equation \eqref{eq:cost_to_go_dynamics_ct}) can be exploited to eliminate the log-mass variable and dynamics entirely, and to equivalently rewrite the problem in terms of the cost-to-go variable and dynamics, in which case the polytopes in the controllable tube would be $7$-dimensional, instead of $8$-dimensional. Particularly, we observe (and can leverage the fact) that the log-mass and the cost-to-go are related by:
\begin{align}
    c(t) = z(t) - z(\tf)
\end{align}
with the bounds on the cost-to-go state given by Equation \eqref{eq:cost-to-go-bounds}. Using this formulation would require additional considerations to account for the initial condition of the log-mass at every step; particularly, this would require either (i) taking the intersection of the controllable set with the halfspaces defined by the log-mass bounds (prior to solving the one-step optimal control problem), in which case potential infeasibility can be detected prior to solving the one-step OCP, or (ii) imposing the halfspaces directly in the one-step OCP, in which case potential infeasibility would be detected via solving the one-step OCP itself. \label{remark:8D-to-7D}
\end{remark}

Given how well CZs scale with problem dimension, we do not observe a marked difference in performance with the formulation resulting from Remark \ref{remark:8D-to-7D}, and hence retain the $8$-dimensional formulation for simplicity. Note that the cost-to-go state, $c(t)$, serves as a proxy for the (instantaneous) ``required fuel'' to reach the target.

Finally, we discretize the log-mass-transformed and cost-to-go-augmented problem using a ZOH control parameterization, as described in \S\ref{subsec:deterministic}—the discretized conic optimal control problem is given by Problem \ref{prob:dt_pdg_conic}.
\begin{figure}[!htb]
\begin{problem}
\begin{mybox}
\begin{center}
    \underline{\textbf{Problem \ref{prob:dt_pdg_conic}: Discrete-Time Precision Landing Problem (Conic)}}
\end{center}
{\small
\begin{mini!}[2]
{\substack{N,\\u_{1},\,\ldots,\,u_{N-1}, \\ \sigma_{1},\, \ldots,\,\sigma_{N-1}}}{c_{1}}
{\label{prob:dt_pdg_conic}}{}
\addConstraint{}{(r_{k+1}, v_{k+1}, z_{k+1}, c_{k+1}) = \Ad\cdot(r_{k}, v_{k}, z_{k}, c_{k}) + \Bd\cdot(u_{k}, \sigma_{k}) + \gd,}{\quad k = 1, \ldots, N-1}
\addConstraint{}{\norm{u_{k}}_{2} \le \sigma_{k}}{\quad k = 1, \ldots, N-1 \label{eq:dt_pdg_slack_cone}}
\addConstraint{}{\sigma_{k} \le \frac{T_{\max}}{\mwet}}{\quad k = 1, \ldots, N-1 \label{eq:dt_pdg_sigma_ub}}
\addConstraint{}{e_{z}^{\top} u_{k} \ge \frac{T_{\min}}{\mdry}}{\quad k = 1, \ldots, N-1}
\addConstraint{}{e_{z}^{\top} u_{k} \ge \sigma_{k}\, \cos\theta_{\max}}{\quad k = 1, \ldots, N-1}
\addConstraint{}{H_{\textsc{gs}}\,r_{k} \le h_{\textsc{gs}}}{\quad k = 2, \ldots, N-1}
\addConstraint{}{\norm{r_{k}}_{\infty} \le r_{\max}}{\quad k = 2, \ldots, N-1}
\addConstraint{}{\norm{v_{k}}_{\infty} \le v_{\max}}{\quad k = 2, \ldots, N-1}
\addConstraint{}{\ln\mdry \le z_{k} \le \ln\mwet}{\quad k = 2, \ldots, N-1}
\addConstraint{}{0 \le c_{k} \le \ln\frac{\mwet}{\mdry}}{\quad k = 2, \ldots, N-1}
\addConstraint{r_{1} = r_{\mathrm{i}},\enskip\!\; v_{1} = v_{\mathrm{i}},\enskip\!\; z_{1} = \ln\mwet,\enskip\!\; c_{1} \le \ln\frac{\mwet}{\mdry}}
\addConstraint{r_{N} = r_{\mathrm{f}},\: v_{N} = v_{\mathrm{f}},\: z_{N} \ge \ln\mdry,\ c_{N} = 0 }
\end{mini!}
}%
\end{mybox}
\end{problem}
\end{figure}
The state constraints in Problem \ref{prob:dt_pdg_conic} are polytopic to begin with. The control constraint set, however, is conic, due to Equation \eqref{eq:dt_pdg_slack_cone}—the control input slack constraint that arises as a result of lossless convexification is a $4$-dimensional quadratic cone (note that the following applies in both continuous time and in discrete time):
\begin{align}
    \mathcal{U}_{\text{slack}} \defeq \left\{(u,\,\sigma) \in \R^{4} \bigm| \norm{u}_{2} \le \sigma,\ \sigma \ge 0\right\} \label{eq:slack_cone}
\end{align}
where $u \in \R^{3}$ and $\sigma \in \R$. The set is closed and convex, but not compact, and a linear approximation of this set would yield an unbounded set. However, since the controllable tube generation method using constrained zonotopes requires the control input constraint set to be a bounded polytope, we also consider the upper bound constraint on the control magnitude, which is a halfspace:
\begin{align}
    \mathcal{U}_{\textsc{ub}} \defeq \left\{(u,\,\sigma) \in \R^{4} \bigm| \sigma \le \frac{T_{\max}}{\mwet}\right\} \label{eq:upper_bound}
\end{align}
Now, consider the intersection of the sets given by Equations \eqref{eq:slack_cone} and \eqref{eq:upper_bound}, $\mathcal{U}_{\text{slack}}$ and $\mathcal{U}_{\textsc{ub}}$, respectively:
\begin{align}
    \mathcal{U}_{\textsc{cqc}} \defeq \mathcal{U}_{\mathrm{slack}} \cap\, \mathcal{U}_{\textsc{ub}} = \left\{(u,\,\sigma) \in \R^{4} \bigm| \norm{u}_{2} \le \sigma,\ 0 \le \sigma \le \frac{T_{\max}}{\mwet}\right\} \label{eq:intersection}
\end{align}
The set given by Equation \eqref{eq:intersection}, $\mathcal{U}_{\textsc{cqc}}$, is a compact quadratic cone. We compute a polytopic inner approximation of $\mathcal{U}_{\textsc{cqc}}$ by following the procedure described in \S\ref{subsec:polytopic_cqc}.

With this approximation applied to the discretized precision landing guidance problem, we get the discrete-time polytopic precision landing guidance problem that fits the template of Problem \ref{prob:dt_ocp_template_polytopic}, which is shown in Problem \ref{prob:dt_pdg_polytopic}. Note that Equations \eqref{eq:dt_pdg_slack_cone} and \eqref{eq:dt_pdg_sigma_ub} in Problem \ref{prob:dt_pdg_conic} are replaced by Equation \eqref{eq:dt_pdg_cqc_approximation} in Problem \ref{prob:dt_pdg_polytopic}.
\begin{figure}[!htb]
\begin{problem}
\begin{mybox}
\begin{center}
    \underline{\textbf{Problem \ref{prob:dt_pdg_polytopic}: Discrete-Time Precision Landing Problem (Polytopic)}}
\end{center}
{\small
\begin{mini!}[2]
{\substack{N,\\u_{1},\,\ldots,\,u_{N-1}, \\ \sigma_{1},\, \ldots,\,\sigma_{N-1}}}{c_{1}}
{\label{prob:dt_pdg_polytopic}}{}
\addConstraint{}{(r_{k+1}, v_{k+1}, z_{k+1}, c_{k+1}) = \Ad\cdot(r_{k}, v_{k}, z_{k}, c_{k}) + \Bd\cdot(u_{k}, \sigma_{k}) + \gd,}{\quad k = 1, \ldots, N-1}
\addConstraint{}{(u_{k}, \sigma_{k}) \in \mathcal{U}_{\textsc{cqc}}}{\quad k = 1, \ldots, N-1 \label{eq:dt_pdg_cqc_approximation}}
\addConstraint{}{e_{z}^{\top} u_{k} \ge \frac{T_{\min}}{\mdry}}{\quad k = 1, \ldots, N-1}
\addConstraint{}{e_{z}^{\top} u_{k} \ge \sigma_{k}\, \cos\theta_{\max}}{\quad k = 1, \ldots, N-1}
\addConstraint{}{H_{\textsc{gs}}\,r_{k} \le h_{\textsc{gs}}}{\quad k = 2, \ldots, N-1}
\addConstraint{}{\norm{r_{k}}_{\infty} \le r_{\max}}{\quad k = 2, \ldots, N-1}
\addConstraint{}{\norm{v_{k}}_{\infty} \le v_{\max}}{\quad k = 2, \ldots, N-1}
\addConstraint{}{\ln\mdry \le z_{k} \le \ln\mwet}{\quad k = 2, \ldots, N-1}
\addConstraint{}{0 \le c_{k} \le \ln\frac{\mwet}{\mdry}}{\quad k = 2, \ldots, N-1}
\addConstraint{r_{1} = r_{\mathrm{i}},\enskip\!\; v_{1} = v_{\mathrm{i}},\enskip\!\; z_{1} = \ln\mwet,\enskip\!\; c_{1} \le \ln\frac{\mwet}{\mdry}}
\addConstraint{r_{N} = r_{\mathrm{f}},\: v_{N} = v_{\mathrm{f}},\: z_{N} \ge \ln\mdry,\ c_{N} = 0 }
\end{mini!}
}%
\end{mybox}
\end{problem}
\end{figure}

\subsection{Closed-Loop Simulation Setup} \label{subsec:sim_setup}

In this subsection, we describe the closed-loop optimal control architecture in \S\ref{subsubsec:architecture} and the problem and simulation parameters in \S\ref{subsubsec:parameters}, and discuss the sources of approximation and conservatism in \S\ref{subsubsec:sources-of-approx}.

\subsubsection{Architecture} \label{subsubsec:architecture}

The closed-loop optimal control architecture for autonomous precision landing is shown in Figure \ref{fig:architecture}. Specifically, the figure depicts the architecture adopted for the deterministic simulations in this section. 

\vspace{\parskip}
\begin{figure}[!b]
    \centering
    \includegraphics[width=\linewidth]{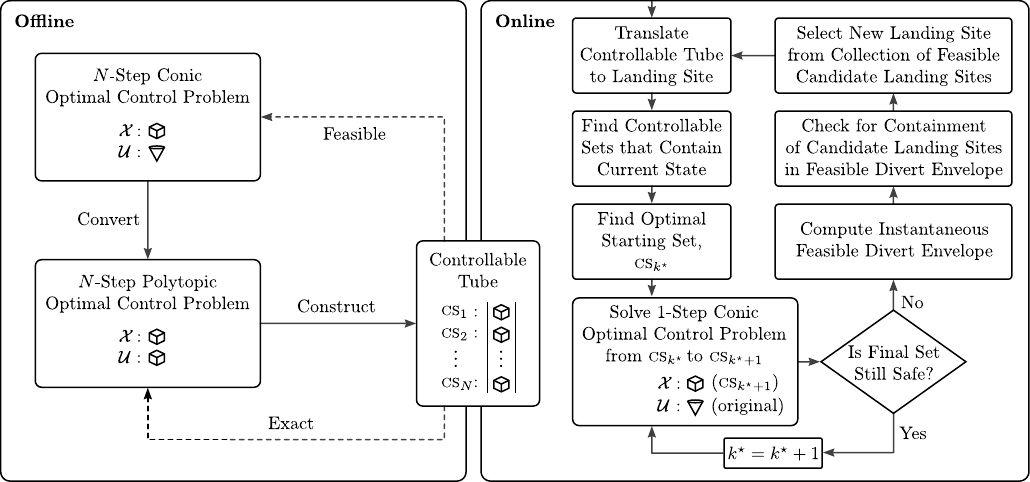}
    \vspace{-1em}
    \caption{A closed-loop optimal control architecture for autonomous precision landing with integrated divert capabilities.}
    \label{fig:architecture}
\end{figure}

The framework involves both offline and online components. Offline, we first obtain a polytopic approximation (Problem \ref{prob:dt_ocp_template_polytopic}) of the conic optimal control problem, Problem \ref{prob:dt_ocp_template_conic}. Next, we generate the controllable tube based on the polytopic optimal control problem, using Algorithm \ref{alg:controllable_tube_recursion_deterministic} (or Algorithm \ref{alg:controllable_tube_recursion_robust} in the robust case), the recursion for which is guaranteed to terminate for the autonomous precision landing problem owing to the limited fuel. The controllable tube is then stored onboard the system. What follows is the description of a notional closed-loop control framework for autonomous precision landing with integrated divert capabilities, as shown in Figure \ref{fig:architecture}.

Given the translation-invariance of the position subspace, rather than generating a controllable tube for each target landing site, we only construct one controllable tube corresponding to the origin, and either (i) translate the tube to the landing site in question via a Minkowski sum operation, or (ii) shift the reference frame such that the corresponding landing site becomes the origin. Next, we perform containment checks (see Table \ref{tab:set-operations}) to determine which of the controllable sets in the controllable tube contain the current state. Once these controllable sets have been determined, Algorithm \ref{alg:horizon} is executed to determine the optimal starting controllable set, i.e., the one that corresponds to the lowest cost-to-go value. Then, the forward rollout (Algorithm \ref{alg:forward_rollout}) is commenced from that controllable set, which involves solving a sequence of one-step optimal control problems (of the form of Problem \ref{prob:one_step_ocp}).

The forward rollout is executed in the nominal mode of operation, i.e., as described in Algorithm \ref{alg:forward_rollout}, for as long as the target landing site is safe. In case it is deemed unsafe at any point in time, computation of the instantaneous feasible divert envelope, i.e., the instantaneous reachable set in the horizontal position subspace, is triggered. Given a collection of candidate landing sites (assumed to be priority-ordered), the next best landing site can be determined by performed containment checks against the feasible divert envelope: if the landing site is contained within the divert envelope, it is guaranteed to be reachable (by construction); if it is not contained in the divert envelope, it is not reachable. It could also be used to (i) determine a feasible site in a safe region (by taking the intersection of the divert envelope with a safety map, for instance), and (ii) achieve maximal diverts or ``flyaway''s to ensure safety of the delivered payload \cite{accikmecse2014mars}, i.e., a point on the boundary of the divert envelope can be chosen in any desired direction, and a closed-loop solution to it is guaranteed to exist by construction (based on Problem \ref{prob:dt_pdg_polytopic}). Once a new landing site is chosen, the controllable tube is then translated to that site, and the process is repeated until touchdown.

\subsubsection{Problem Parameters} \label{subsubsec:parameters}

The problem parameter values chosen for the optimal, resilient, and robust control simulations are listed in Table \ref{tab:params}. In the table, $\Delta t$ refers to the discretization sampling time, and $n_{\mathrm{points}}$ refers to the number of points to be spread on the $3$D unit sphere for the polytopic approximation of the conic control constraint set; $r_{\mathrm{f}}^{\mathrm{backup}}$ is the backup landing site used in the resilient control simulations. 

The closed-loop robust control (Monte Carlo) simulation is run for a fixed final time, i.e., a fixed horizon length, $N$, in order to stay consistent with the adopted probabilistic framework. The $3$-sigma control uncertainty is denoted by $\delta_{3\sigma}^{u}$—note that $R_{u} = \frac{\delta_{3\sigma}^{u}}{3}$ and $\Sigma^{u} = R_{u}^{2}\,I_{3} \in \R^{3}$. The time-varying $3$-sigma position and velocity uncertainties are denoted by $\delta_{3\sigma}^{r}(k)$ and $\delta_{3\sigma}^{v}(k)$, respectively, where $k = 1, \ldots, N$. Similarly, note that $\Sigma_{k}^{x} = \blkdiag\!\left\{\left(\frac{\delta_{3\sigma}^{r}(k)}{3}\right)^{\!2} I_{3},\,\left(\frac{\delta_{3\sigma}^{v}(k)}{3}\right)^{\!2} I_{3}\right\} \in \R^{6}$. Finally, we note that $\delta_{3\sigma}^{r}(1)$ corresponds to a $30$ m initial $3$-sigma navigation uncertainty in position, and $\delta_{3\sigma}^{v}(1)$ corresponds to a $0.6$ m\,s$^{-1}$ initial $3$-sigma navigation uncertainty in velocity; the value for $\delta_{3\sigma}^{u}$ corresponds to roughly $0.5\%$ the maximum control magnitude.

\newsavebox{\RightColBox}
\newlength{\RightColH}
\sbox{\RightColBox}{%
  \begin{minipage}[t]{0.48\linewidth}
    \vspace*{0pt}\centering
    \begin{tabular}{>{\centering\arraybackslash}m{2.75cm} >{\centering\arraybackslash}m{4.25cm}}
    \toprule
    \textbf{Parameter} & \textbf{Value}\\
    \midrule
    $N$ & free\\
    $\Delta t$ & $3$ s\\
    $\alpha$ & $0.00115$ s\,m$^{-1}$\\
    $r_{\mathrm{i}}$ & $(875, 0, 635)$ m\\
    $r_{\mathrm{f}}^{\mathrm{backup}}$ & $(1700, 0, 0)$ m\\
    $v_{\mathrm{i}}$ & $(40, 0, -30)$ m\,s$^{-1}$\\
    $n_{\mathrm{points}}$ & $302$\\
    \midrule
    $N$ & 20\\
    $\Delta t$ & $15$ s\\
    $\alpha$ & $0.0002875$ s\,m$^{-1}$\\
    $r_{\mathrm{i}}$ & $(4000, 4000, 4000)$ m\\
    $v_{\mathrm{i}}$ & $(-10, -10, -10)$ m\,s$^{-1}$\\
    $n_{\mathrm{points}}$ & $14$\\
    $\lambda$ & $0.95$\\
    $\delta_{3\sigma}^{u}$ & $0.023$ m\,s$^{-2}$\\
    $\delta_{3\sigma}^{r}(k)$ & $1.5\,(N-k+1)$ m\\
    $\delta_{3\sigma}^{v}(k)$ & $0.03\,(N-k+1)$ m\,s$^{-1}$\\
    \bottomrule
    \end{tabular}
    \label{tab:params-robust}
  \end{minipage}%
}
\setlength{\RightColH}{\dimexpr\ht\RightColBox+\dp\RightColBox\relax}

\begin{table}[!htpb]
\centering
\begin{tabular}{@{}p{0.48\linewidth}@{\hspace{0.02\linewidth}}p{0.48\linewidth}@{}}
\begin{minipage}[t][\RightColH][t]{\linewidth}
  \vspace*{0pt}\centering
  \begin{tabular}{>{\centering\arraybackslash}m{1cm} >{\centering\arraybackslash}m{6cm}}
  \toprule
  \textbf{Parameter} & \textbf{Value}\\
  \midrule
  $g$ & $1.625$ m\,s$^{-2}$\\
  $T_{\mathrm{full}}$ & $10500$ kg\,m\,s$^{-2}$\\
  $T_{\max}$ & $0.8\,T_{\mathrm{full}}$\\
  $T_{\min}$ & $0.2\,T_{\mathrm{full}}$\\
  $\mwet$ & $1905$ kg\\
  $\mdry$ & $1505$ kg\\
  $\theta_{\max}$ & $50^{\circ}$\\
  $r_{\max}$ & $4000$ m\\
  $v_{\max}$ & $100$ m\,s$^{-1}$\\
  $r_{\mathrm{f}}$ & $(0, 0, 0)$ m\\
  $v_{\mathrm{f}}$ & $(0, 0, 0)$ m\,s$^{-1}$\\
  $\gamma_{\max}$ & $80^{\circ}$\\
  $H_{\textsc{gs}}$ & $\begin{bmatrix}\cos\gamma_{\max}&0&-\sin\gamma_{\max}\\[2pt]
  0&\cos\gamma_{\max}&-\sin\gamma_{\max}\\[2pt]
  -\cos\gamma_{\max}&0&-\sin\gamma_{\max}\\[2pt]
  0&-\cos\gamma_{\max}&-\sin\gamma_{\max}\end{bmatrix}$\\
  $h_{\textsc{gs}}$ & $(0, 0, 0, 0)$\\[0.09em]
  \bottomrule
  \end{tabular}
\end{minipage}
&
\usebox{\RightColBox}
\\
\end{tabular}
\ifjgcd
    \vspace{2em}
\fi
\caption{Problem parameters. Left: common; right top: closed-loop optimal and resilient control simulations; right bottom: closed-loop robust control (Monte Carlo) simulations, where $k = 1, \ldots, N$.}
\label{tab:params}
\end{table}
\vspace{-\parskip}

\subsubsection{Sources of Approximation and Conservatism} \label{subsubsec:sources-of-approx}

Here, we catalog the sources of approximation and conservatism in the proposed closed-loop control framework. As shown in Equation \eqref{eq:control-bounds-log-mass}, the original bounds on the control magnitude are (time-varying) nonlinear functions of the log-mass; instead, we only consider conservatively chosen time-invariant bounds, as shown in Equation \eqref{eq:time-invariant-control-bounds}. Further, we introduce conservatism in the control magnitude lower bound, as shown in Equation \eqref{eq:conservative-lower-bound-constraint}. Next, we consider a polytopic inner approximation to the control constraint set for controllable tube generation via set recursion, which effectively shrinks the controllable space—this approximation can be made arbitrarily accurate by choosing more points to spread on the unit sphere (see \S\ref{subsec:polytopic_cqc}).

In the robust case, specifically, there are three additional sources of conservatism: (i) the independence assumption on the effective disturbance for robustification, in which case the controllable tube is robust to the worst-case disturbance at each time-step without accounting for coupling of the state disturbances in time, (ii) invocation of the triangle and reverse triangle inequalities to tighten the control constraints in \S\ref{subsubsec:landing-control-robustification}, and (iii) the Pontryagin difference algorithm \citep[Algorithm 3]{vinod2025projection} in the set recursion for robust controllable tube generation, which returns an inner approximation of the true Pontryagin difference that worsens as the latent dimension of the constrained zonotope(s) increases, i.e., as the number of columns, $n_{g}$, of $G$ in Equation \eqref{eq:cz-definition} increases. Choosing more points on the unit sphere to obtain the polytopic approximation of the conic control set, for example, will lead to constrained zonotopes that are larger in latent dimension, thus making the Pontryagin difference more conservative. Note, however, that the conservatism introduced by (i), (ii), and (iii) is \emph{in the right direction}, i.e., they lead to the robust controllable tube being \emph{more} robust than necessary, thus preserving all the claimed probability guarantees.

\subsection{Optimal Control: The Optimal Guidance Algorithm} \label{subsec:pdg_deterministic}

We solve the deterministic, free-final-time (minimum-fuel) autonomous precision landing guidance problem using Algorithm \ref{alg:controllable_tube_recursion_deterministic} for controllable tube generation and Algorithm \ref{alg:forward_rollout} for the forward rollout. Within the forward rollout, we solve Problem \ref{prob:one_step_ocp_polytopic} as is and do not invoke Remark \ref{remark:conic_one_step_ocp} here to demonstrate global optimality. For comparison, we also solve Problem \ref{prob:dt_pdg_polytopic} directly in an open-loop setting, and show the control magnitude profiles in Figure \ref{fig:cl-v-ol}.

\begin{figure}[!htb]
    \centering
    \includegraphics[width=0.965\linewidth]{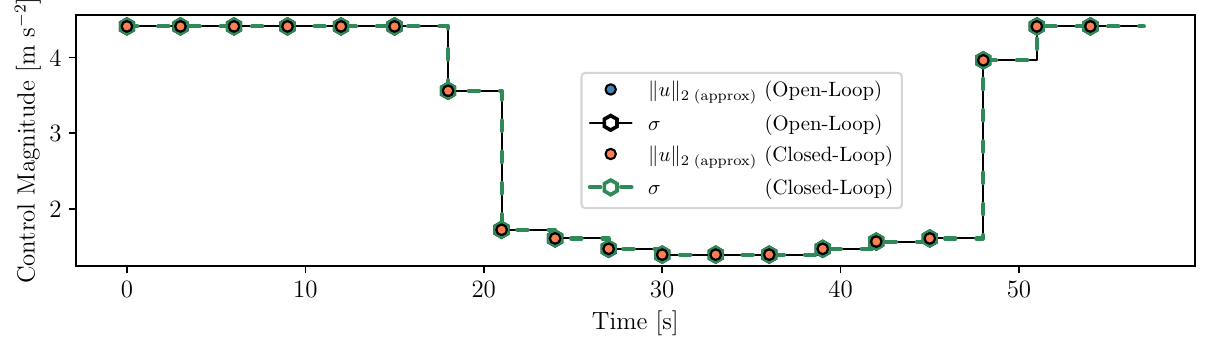}
    \caption{Algorithm \ref{alg:forward_rollout}—with the one-step optimal control problem in accordance with Problem \ref{prob:one_step_ocp_polytopic}—recovers a globally optimal solution to the polytopic open-loop \emph{free-final-time} (minimum-fuel) problem (Problem \ref{prob:dt_pdg_polytopic}), which is solved in concert with a golden section search.}
    \label{fig:cl-v-ol}
\end{figure}
\subsection{Robust Control: Robustness to Navigation and Actuation Uncertainties} \label{subsec:pdg_robust}

In this subsection, we describe robust control in the context of the autonomous precision landing problem. Specifically, we model navigation and actuation uncertainty in \S\ref{subsubsec:nav-and-act-uncertainty}, describe practical aspects of robust controllable tube generation in \S\ref{subsubsec:landing-robust-controllable-tube}, and describe robustification of the control and state constraints in \S\ref{subsubsec:landing-control-robustification} and \S\ref{subsubsec:landing-state-robustification}, respectively. Finally, we present Monte Carlo simulation results in \S\ref{subsubsec:monte-carlo}.

\subsubsection{Navigation and Actuation Uncertainty Models} \label{subsubsec:nav-and-act-uncertainty}

We assume that the estimated position and velocity are subject to additive navigation (state) uncertainty and that the control input is subject to additive actuation (control) uncertainty, both of which are modeled as Gaussian disturbances, in accordance with \S\ref{subsec:uncertainty-modeling}. Note that we treat the augmented control slack variable, $\sigma$, such that it is not directly perturbed; instead, we robustify the original control constraints first and then convexify them (i.e., introduce $\sigma$). We also assume that the log-mass state and the cost-to-go state can be estimated perfectly (since they are functions of the \emph{applied} control input and not the commanded control input).

The bounded disturbance sets for the uncertainties, as required by Algorithm \ref{alg:controllable_tube_recursion_robust}, are constructed in accordance with \S\ref{subsec:stochastic}.

\subsubsection{Robust Controllable Tube Generation} \label{subsubsec:landing-robust-controllable-tube}

We schedule the state covariances such that the corresponding disturbance ellipses decrease in size over time (the standard deviations decrease linearly)—this accounts for the improvement in the accuracy of the navigation estimates as the system approaches the landing site (as more information is acquired by the sensors). In the robust controllable tube set recursion algorithm (Algorithm \ref{alg:controllable_tube_recursion_robust}), the terminal set constrained zonotope needs to be full-dimensional and have a \textsc{MinRow} representation for the Pontryagin difference step to be computationally tractable \citep[Definition 3 and Proposition 1]{vinod2025projection}. To generate this full-dimensional terminal set, we first perform a deterministic set recursion (Algorithm \ref{alg:controllable_tube_recursion_deterministic}) for one $\Delta t$-second time interval, and then proceed with robust set recursion. This has the added benefit of guaranteeing the existence of a solution to the original terminal set for any true state lying in the thus-generated full-dimensional terminal set, and can thus account for reserve fuel, for example, in an exact manner, without requiring conservative estimates. In practice, we observe that the conditions required for Pontryagin differencing to be tractable \citep[Definition 3 and Proposition 1]{vinod2025projection} do not hold when the set is generated via a one-step recursion. To mitigate this, we perform a two-step recursion, but with a sampling time of $\frac{\Delta t}{2}$ instead (such that the effective interval is still $\Delta t$).

\subsubsection{Robustification of Control Constraints} \label{subsubsec:landing-control-robustification}

Here, we present one approach to robustifying the control constraint set with respect to the worst-case control disturbance.

\paragraph{Control Magnitude Bounds}

To robustify the control magnitude bound constraint, we first consider the original constraint given by Equation \eqref{eq:time-invariant-control-bounds}, and tighten the bounds with respect to the worst-case disturbance from the control disturbance set. Once the tightened bounds are obtained, the lower bound constraint is replaced with the conservative convex counterpart of Equation \eqref{eq:conservative-lower-bound-constraint}.

The control magnitude constraint, Equation \eqref{eq:time-invariant-control-bounds}, with actuation uncertainty, is given by:
\begin{align}
    \frac{T_{\min}}{\mdry} &\le \|u_{k} + \wu_{k}\|_{2} \le \frac{T_{\max}}{\mwet}, \quad k = 1, \dots, N-1 \label{eq:magnitude_bounds}
\end{align}
To be able to account for the slack variable that will be introduced for lossless convexification, we seek a representation of Equation \eqref{eq:magnitude_bounds} that is solely in terms of $\norm{u_{k}}_{2}$. As such, we approximate the thrust magnitude constraint given by Equation \eqref{eq:magnitude_bounds} as follows.

Invoking the triangle and reverse triangle inequalities, we get:
\begin{align}
    \abs{\|u_{k}\|_{2} - \|\wu_{k}\|_{2}} \le \|u_{k} + \wu_{k}\|_{2} \le \|u_{k}\|_{2} + \|\wu_{k}\|_{2} \label{eq:triangle_inequality}
\end{align}
We make the assumption that $\|\wu_{k}\|_{2} \le \|u_{k}\|_{2}$, i.e., the magnitude of the bounded disturbance will be less than or equal to the control input magnitude, to ensure that any constraint tightening we perform will not make the control magnitude constraint set infeasible. With this assumption, Equation \eqref{eq:triangle_inequality} can be written as:
\begin{align}
    \|u_{k}\|_{2} - \|\wu_{k}\|_{2} \le \|u_{k} + \wu_{k}\|_{2} \le \|u_{k}\|_{2} + \|\wu_{k}\|_{2} \label{eq:triangle_inequality_w_assumption}
\end{align}
Now, our goal is to represent Equation \eqref{eq:triangle_inequality_w_assumption} such that it is solely in terms of $u_{k}$. For this, we consider the Equation \eqref{eq:triangle_inequality_w_assumption} for all possible disturbances in the disturbance set:
\begin{subequations}
\begin{align}
    \|u_{k}\|_{2} - \|\wu_{k}\|_{2} &\le \|u_{k} + \wu_{k}\|_{2} \le \|u_{k}\|_{2} + \|\wu_{k}\|_{2} \quad \forall\,\wu_{k} \in \Wu \label{eq:triangle_inequality_forall_w} \\
    \iff \inf_{\wu_{k}\,\in\,\Wu}\!\left(\|u_{k}\|_{2} - \|\wu_{k}\|_{2}\right) &\le \|u_{k} + \wu_{k}\|_{2} \le \sup_{\wu_{k}\,\in\,\Wu}\!\left(\|u_{k}\|_{2} + \|\wu_{k}\|_{2}\right) \\
    \iff \|u_{k}\|_{2} - \sup_{\wu_{k}\,\in\,\Wu}\!\|\wu_{k}\|_{2} &\le \|u_{k} + \wu_{k}\|_{2} \le \|u_{k}\|_{2} + \sup_{\wu_{k}\,\in\,\Wu}\!\|\wu_{k}\|_{2}
\end{align}
\end{subequations}
Therefore,
\begin{align}
    \|u_{k}\|_{2} - \psiu \le \|u_{k} + \wu_{k}\|_{2} \le \|u_{k}\|_{2} + \psiu \label{eq:triangle_inequality_approximation}
\end{align}
where,
\begin{align}
    \psiu \defeq \sup_{\wu_{k}\,\in\,\Wu}\|\wu_{k}\|_{2}
\end{align}
Now, the control bound constraint can be rewritten (approximated) in terms of $u_{k}$ as follows:
\begin{subequations}
\begin{align}
    \frac{T_{\min}}{\mdry} \le \|u_{k}\|_{2} - \psiu &\le \|u_{k} + \wu_{k}\|_{2} \le \|u_{k}\|_{2} + \psiu \le \frac{T_{\max}}{\mwet} \\
    \implies \frac{T_{\min}}{\mdry} + \psiu &\le \|u_{k}\|_{2} \le \frac{T_{\max}}{\mwet} - \psiu \label{eq:thrust_bound_tightened}
\end{align}
\end{subequations}
Equation \eqref{eq:thrust_bound_tightened} represents a conservative tightening of the thrust magnitude bound constraint set, in that it is guaranteed to be smaller than or equal to the tightened constraint set that would have resulted had the approximation not been introduced.

Further, we derive a closed-form expression for $\psiu$ by making the assumption that the control disturbance is isotropic, in that the projection of the effective disturbance ellipsoid (given by Equation \eqref{eq:effective-disturbance-ellipsoid}) onto the control dimensions is a $3$-dimensional ball, which in turn implies that $\Sigma_{u}$ in Equation \eqref{eq:uncertain_controls} is of the form $R_{u}^{2}\,I_{3}$, where $R_{u} > 0$ is the radius of the $3$D ball. Therefore:
\begin{align}
    \psiu &= R_{u} \\
    \implies \frac{T_{\min}}{\mdry} + R_{u} &\le \|u_{k}\|_{2} \le \frac{T_{\max}}{\mwet} - R_{u} \label{eq:tightened-magnitude-bounds}
\end{align}
Note that the control disturbance can be ellipsoidal as well if desired, but we make the isotropic assumption for simplicity. Finally, we (i) retain the upper bound constraint in Equation \eqref{eq:tightened-magnitude-bounds}, but replace the $2$-norm term with the slack variable, and (ii) replace the lower bound constraint with the conservative convex constraint (see Equation \eqref{eq:conservative-lower-bound-constraint}), to yield the following tightened constraints:
\begin{subequations}
\begin{align}
    \sigma_{k} &\le \frac{T_{\max}}{\mwet} - R_{u} \\
    e_{z}^{\top}u_{k} &\ge \frac{T_{\min}}{\mdry} + R_{u}
\end{align}
\end{subequations}

\paragraph{Thrust Pointing}

As with the control magnitude bound constraints, the thrust pointing constraint with actuation uncertainty, for $k = 1, \dots, N-1$, is robustified and reformulated as follows:
\begin{subequations}
\begin{align}
    e_{z}^{\top}(u_{k} + \wu_{k}) &\ge \|u_{k} + \wu_{k}\|_{2}\,\cos\theta_{\max}, \quad \forall\,\wu_{k} \in \Wu \\
    \iff \inf_{\wu_{k}\,\in\,\Wu}\! e_{z}^{\top}(u_{k} + \wu_{k}) &\ge \sup_{\wu_{k}\,\in\,\Wu}\! \|u_{k} + \wu_{k}\|_{2}\,\cos\theta_{\max} \\
    \impliedby \inf_{\wu_{k}\,\in\,\Wu}\! (e_{z}^{\top}u_{k} + e_{z}^{\top}\wu_{k}) &\ge \sup_{\wu_{k}\,\in\,\Wu}\! (\|u_{k}\|_{2} + \|\wu_{k}\|_{2})\,\cos\theta_{\max} \\
    \iff e_{z}^{\top}u_{k} + \inf_{\wu_{k}\,\in\,\Wu}\! e_{z}^{\top}\wu_{k} &\ge (\|u_{k}\|_{2} + \sup_{\wu_{k}\,\in\,\Wu}\! \|\wu_{k}\|_{2})\,\cos\theta_{\max} \\
    \iff e_{z}^{\top}u_{k} - \psiu &\ge (\|u_{k}\|_{2} + \psiu)\,\cos\theta_{\max} \\
    \iff e_{z}^{\top}u_{k} - R_{u} &\ge (\|u_{k}\|_{2} + R_{u})\,\cos\theta_{\max} \\
    \iff e_{z}^{\top}u_{k} - R_{u} &\ge (\sigma_{k} + R_{u})\,\cos\theta_{\max}
    \label{eq:pointing-with-uncertainty}
\end{align}
\end{subequations}

\subsubsection{Robustification of State Constraints} \label{subsubsec:landing-state-robustification}

The position and velocity states are assumed to be subject to time-varying uncertainty. While the log-mass and cost-to-go states are assumed to be estimated perfectly, and while we do not explicitly subject the augmented control slack variable, $\sigma$, to uncertainty, the influence of actuation uncertainty on them calls for additional consideration.

Specifically, since there are no explicit sources of disturbance affecting the log-mass and cost-to-go states, the Pontryagin difference step in the robust set recursion algorithm (Algorithm \ref{alg:controllable_tube_recursion_robust}) is agnostic to the influence of control disturbances on them. As such, we need to ``manually'' ensure that the bounds on these states are appropriately tightened.

The log-mass dynamics with actuation uncertainty are given as follows, for $k = 1, \ldots, N-1$:
\begin{align}
    z_{k+1} = z_{k} - \alpha\, \|u_{k} + \wu_{k}\|_{2} \label{eq:uncertain_mass_dynamics}
\end{align}
We consider the worst-case mass depletion and for this, we consider the most adversarial disturbance. From the RHS of Equation \eqref{eq:triangle_inequality_approximation}, since $\alpha > 0$, we get:
\begin{align}
    z_{k+1} &= z_{k} - \alpha \left(\|u_{k}\|_{2} + \psiu\right) = z_{k} - \alpha \left(\|u_{k}\|_{2} + R_{u}\right) \label{eq:worst-case-log-mass}
\end{align}
Similarly, the worst-case cost-to-go ``depletion'', for $k = 1, \ldots, N-1$, would be:
\begin{align}
    c_{k+1} = c_{k} - \alpha \left(\|u_{k}\|_{2} + R_{u}\right) \label{eq:worst-case-cost-to-go}
\end{align}
Rather than explicitly tightening the bounds for these states—which would result in time-varying state constraint sets—we directly consider the worst-case depletion dynamics given by Equations \eqref{eq:worst-case-log-mass} and \eqref{eq:worst-case-cost-to-go} in the robust set recursion (with the original bounds)—this is an equivalent approach that implicitly achieves the effect of tightening the bounds, since both $z$ and $c$ are strictly monotonically decreasing quantities.

The Pontryagin difference step in Algorithm \ref{alg:controllable_tube_recursion_robust} accounts for robustification of the constraints on the remaining states that are influenced by control and/or state disturbances.

\subsubsection{Monte Carlo Simulation} \label{subsubsec:monte-carlo}

To demonstrate the closed-loop robust control framework and highlight the probabilistic guarantees, we perform a Monte Carlo simulation with the problem parameters given in Table \ref{tab:params}. For the same problem parameters (and boundary conditions), we run $100$ cases with different random disturbance sequences sampled from Gaussian distributions, in accordance with \S\ref{sec:robust}. We set the probability that the terminal state estimate, $\tilde{y}_{N}$, lies in the robustified terminal set, $\tilde{\mathcal{X}}_{\mathrm{f}} \ominus \mathcal{W}_{N}$—which corresponds to the probability that the true terminal state, $y_{N}$, lies in the true terminal set, $\tilde{\mathcal{X}}_{\mathrm{f}}$—to be $\lambda = 95\%$. In other words, the true terminal state is guaranteed to lie in the true terminal set with an $95\%$ probability. We plot the terminal values of the true position in Figure \ref{fig:robust-position} and the true velocity in Figure \ref{fig:robust-velocity}; all $100$ trials are successful, in that all the true terminal state values lie inside the true terminal set in all $100$ trials. Note that the success-rate is $100\%$ despite the requested probability only being $95\%$, owing to the conservatism in the robustification (in the right direction).
\vspace{\baselineskip}
\begin{figure}[!htb]
    \centering
    \includegraphics[width=\linewidth]{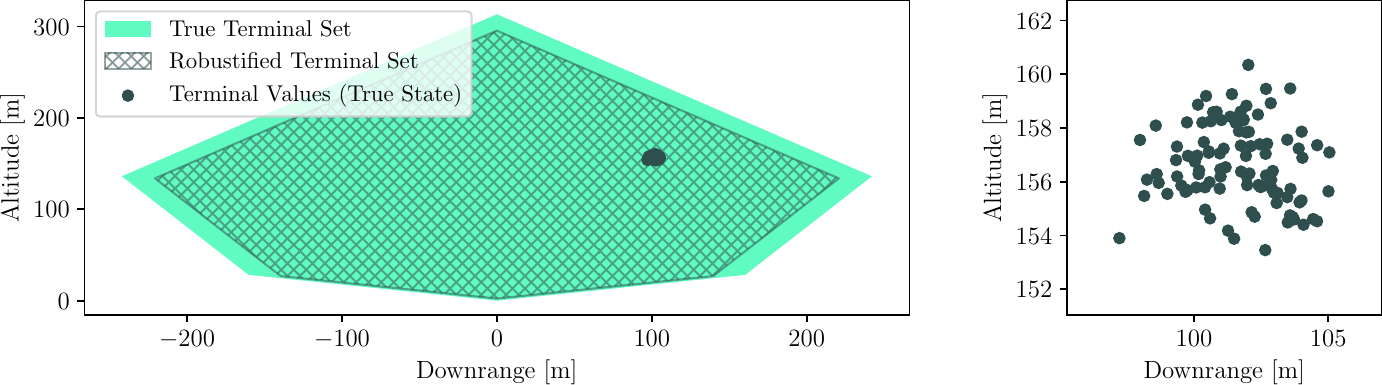}
    \caption{Terminal position values from the Monte Carlo simulation, projected onto the $x$-$z$ plane.}
    \label{fig:robust-position}
\end{figure}
\begin{figure}[!htb]
    \centering
    \includegraphics[width=\linewidth]{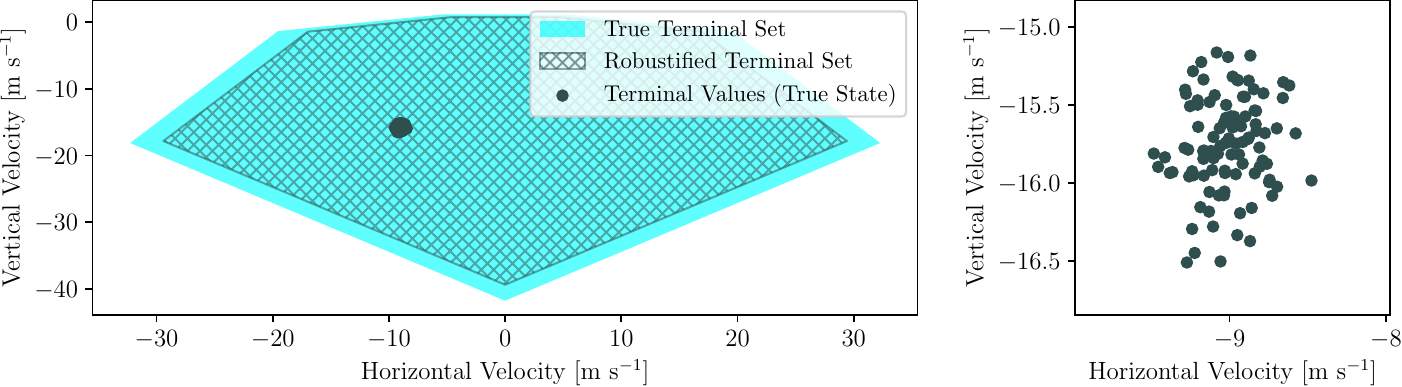}
    \caption{Terminal velocity values from the Monte Carlo simulation, projected onto the $x$-$z$ plane.}
    \label{fig:robust-velocity}
\end{figure}
\subsection{Resilient Control: Instantaneous Divert Envelope and Decision-Deferral} \label{subsec:pdg_resilient}

We demonstrate two modes of closed-loop resilient control for autonomous precision landing, the first based on the computation of the instantaneous feasible divert envelope, i.e., the instantaneous reachable set in the horizontal position subspace (see \S\ref{subsec:instantaneous-reachability}), and the second based on the set-based deferred-decision trajectory optimization framework (see \S\ref{subsec:decision-deferral}).

\begin{figure}[H]
    \centering
    \begin{mybox2}
    \includegraphics[width=0.825\linewidth]{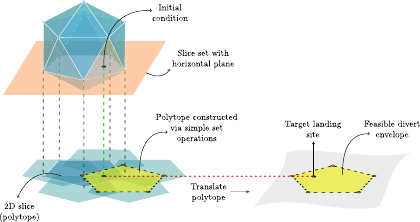}
    \end{mybox2}
    \caption{Construction of the instantaneous feasible divert envelope for Problem \ref{prob:dt_pdg_polytopic} using Algorithm \ref{alg:forward_reachable_set_cz}. The $3$D set depicted is the projection of the $8$D controllable set onto the position coordinates.}
    \label{fig:divert-envelope}
\end{figure}

The first mode of resilient control relies on computation of the instantaneous feasible divert envelope (based on Algorithm \ref{alg:forward_reachable_set_cz}), and is depicted in Figure \ref{fig:divert-envelope}. We note that divert envelope thus generated is \emph{exact} with respect to Problem \ref{prob:dt_pdg_polytopic}. Here, we consider two landing sites, a nominal site and a divert (backup) landing site—the numerical values for parameters chosen for these simulations are provided in Table \ref{tab:params}. In Figure \ref{fig:no-ddto}, we show the (independent) optimal free-final-time trajectories corresponding to the two landing sites, the corresponding control magnitude profiles (in which lossless convexification holds), the instantaneous reachable set (i.e., the instantaneous divert envelope) at the $1$st time-step (corresponding to time $t = 0$ s), and the instantaneous reachable set at the $8$th time-step (corresponding to time $t = 21$ s). Note that at $t = 0$ s, both the target landing sites are reachable, but at $t = 21$ s, neither of the reachable sets contains the other, i.e., by committing to a landing site too early (at $t = 0$ s) and performing the forward rollout (Algorithm \ref{alg:forward_rollout}) accordingly, the other landing site is no longer reachable at $t = 21$ s. 

The second mode of resilient control involves the set-based counterpart of the \emph{deferred-decision trajectory optimization} (\ddto{}) framework \citep{elango2025deferred}. Here, we consider the same scenario as the previous case, but instead, use the forward rollout algorithm with decision-deferral (Algorithm \ref{alg:forward_rollout_ddto}). In this mode, the system is required to remain in the intersection of the controllable tube (corresponding to the nominal site) and the translated controllable tube (corresponding to the divert site) for as long as possible. The two trajectories, the corresponding control magnitude profiles (in which lossless convexification holds), and the instantaneous reachable sets at $t = 0$ s and $t = 21$ s, are shown in Figure \ref{fig:ddto}. Note that both the target landing sites are reachable at (and until) $t = 21$ s in this case, unlike in the previous divert case, thus demonstrating the benefit of deferring decision. In the event one of the target landing sites becomes unsafe during the first $21$ seconds, with decision-deferral, we preserve the ability to divert to the other site, whereas this would not be possible in the feasible divert case.

\begin{figure}[!htpb]
    \centering
    \includegraphics[width=\linewidth]{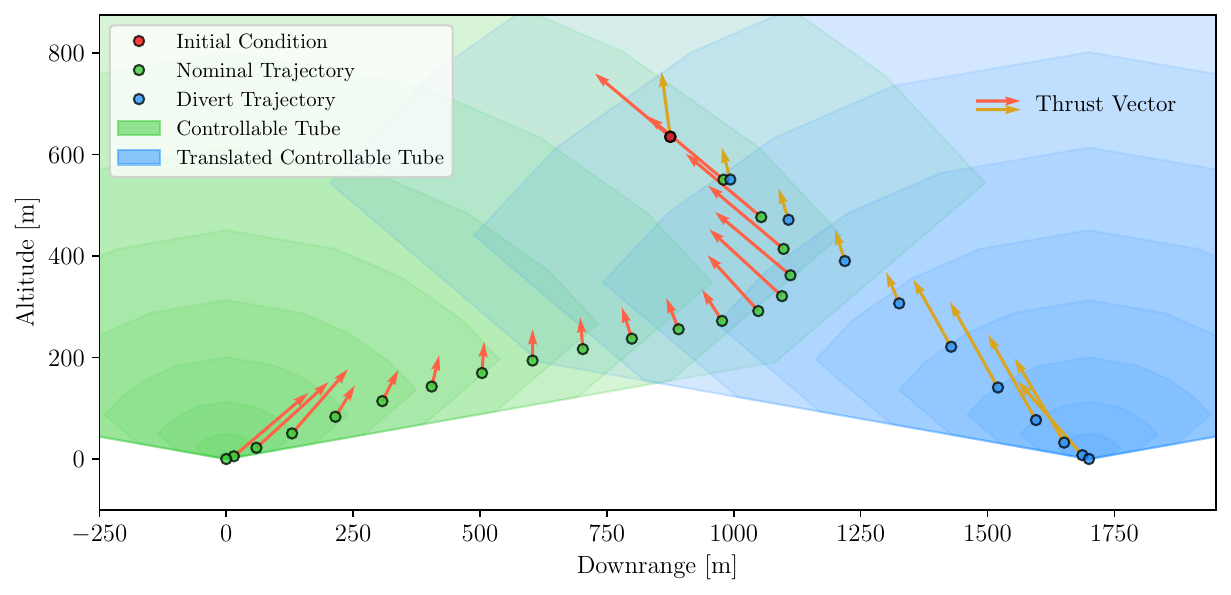}
    \hspace*{0.75em}\includegraphics[width=0.98\linewidth]{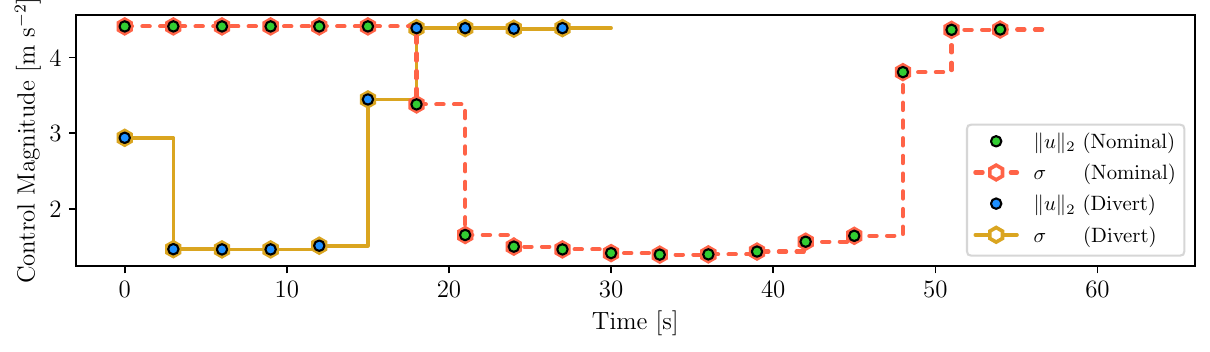}
    \includegraphics[width=0.725\linewidth]{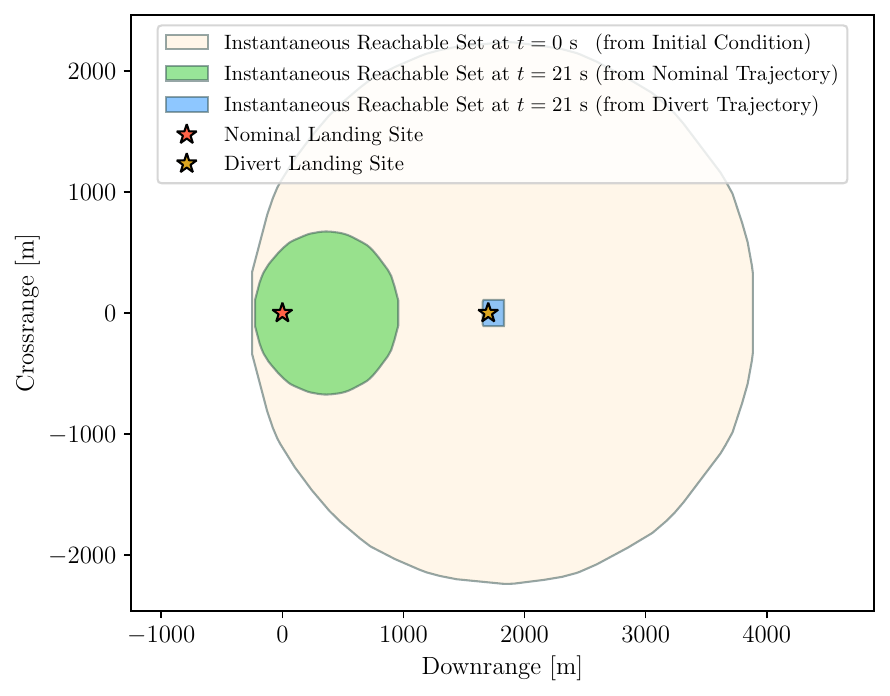}
    \caption{\textbf{Feasible divert mode}: Free-final-time trajectory optimization using the controllable tube and its translation, with Algorithm \ref{alg:forward_rollout}: at $t = 21$ s, the two divert envelopes are mutually exclusive, and thus the system en route to one landing site is unable to reach the other.}
    \label{fig:no-ddto}
\end{figure}

\begin{figure}[!htpb]
    \centering
    \includegraphics[width=\linewidth]{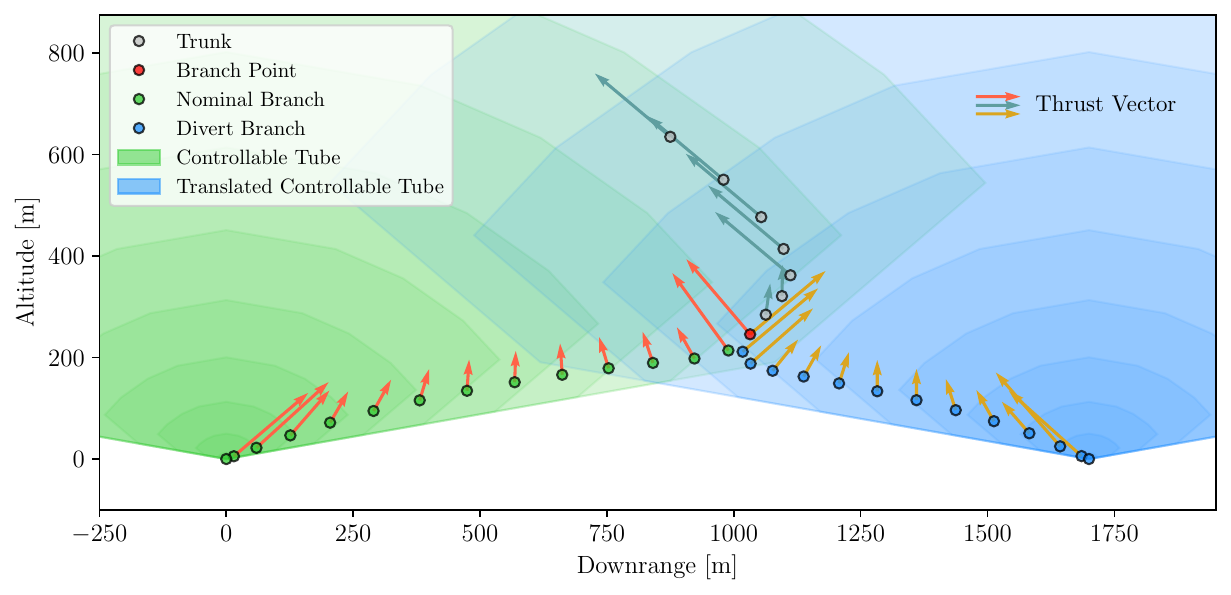}
    \hspace*{0.75em}\includegraphics[width=0.98\linewidth]{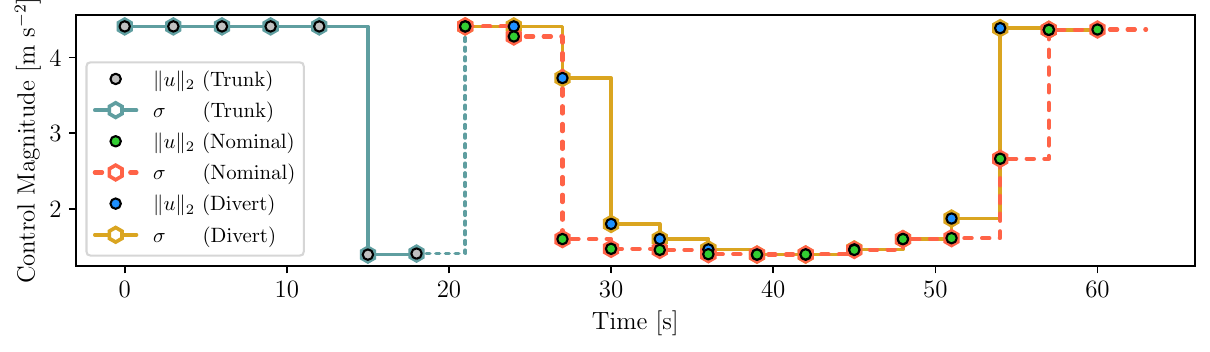}
    \includegraphics[width=0.725\linewidth]{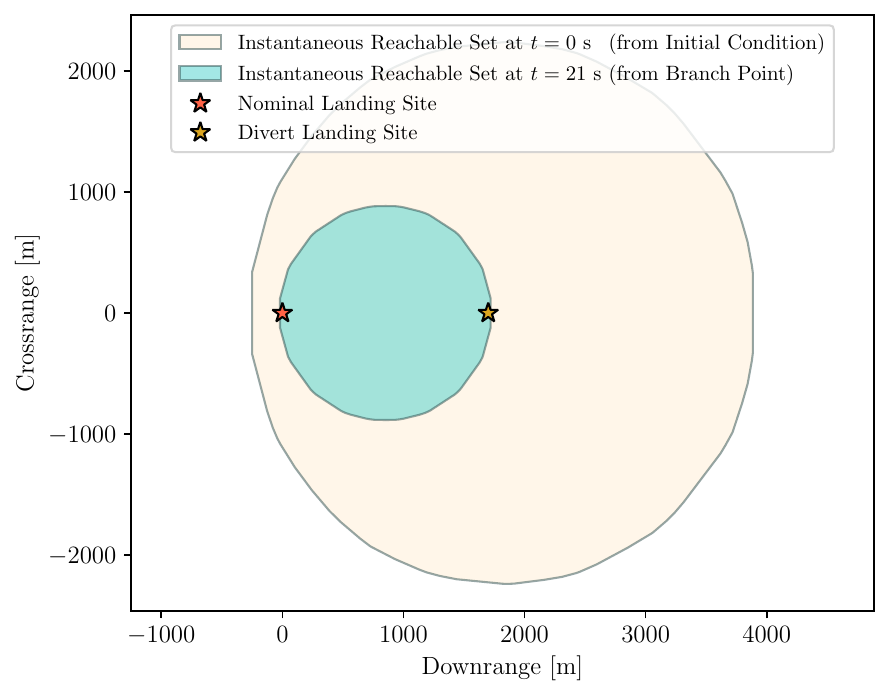}
    \caption{\textbf{Maximal decision-deferral mode}: Free-final-time \emph{deferred-decision} trajectory optimization (\ddto{}) using the controllable tube and its translation, with Algorithm \ref{alg:forward_rollout_ddto}: at $t = 21$ s, the divert envelope(s) contain both the landing sites as a result of deferring decision for as long as possible, and hence, both the sites are reachable until then, in contrast to the feasible divert mode (see Figure \ref{fig:no-ddto}).}
    \label{fig:ddto}
\end{figure}

\section*{Conclusions}

We have developed a unified computational framework for closed-loop optimal, robust, and resilient predictive control for autonomous systems. Motivated by the autonomous precision landing problem, we have proposed a set-based framework for free-final-time optimal control, robust control with respect to both state and control uncertainty, instantaneous reachable set computation, and maximal decision-deferral. The framework is based on dynamic programming over controllable tubes, achieving computational tractability using constrained zonotopes, a parameterization of compact convex polytopes that enables efficient high-dimensional set operations. The proposed control architecture has been validated through a comprehensive autonomous precision landing case study.

This work aims at bridging the gap between traditional open-loop guidance approaches and closed-loop control methods for applications such as precision landing.

\ifjgcd
\else
    Avenues for future research include generalizing the class of problems considered, reducing the number of sources of approximation, and implementing the computational framework via high-performance code to assess onboard viability for autonomous systems in terms of memory requirements and solve-times.
\fi

\section*{Acknowledgment}

The authors gratefully acknowledge Kento Tomita for his feedback on an initial draft and \behcet{} for his generous guidance, insightful comments, and valuable discussions throughout the development of this work.

\newpage

\begin{spacing}{1}
    \bibliography{references}
\end{spacing}
\ifjgcd
    \newpage
    \let\clearpage\relax
\else
    \addcontentsline{toc}{section}{References}
    \newpage
    \let\clearpage\relax
\fi

\end{document}